\newtheorem{thm}{Theorem}[section]
\newtheorem{cor}[thm]{Corollary}
\newtheorem{lem}[thm]{Lemma}
\newtheorem{prop}[thm]{Proposition}
\theoremstyle{definition}
\newtheorem{defn}[thm]{Definition}
\theoremstyle{remark}
\numberwithin{equation}{section}
\theoremstyle{remark}
\newtheorem{exam}[thm]{Example}
\newcommand{\mbb}{\mathbb}
\newcommand{\ra}{\rightarrow}
\newcommand{\pa}{\partial}
\newcommand{\ov}{\overline}
\newcommand{\sm}{\setminus}
\newcommand{\ep}{\epsilon}
\newcommand{\no}{\noindent}
\newcommand{\al}{\alpha}
\newcommand{\cal}{\mathcal}
\newcommand{\ti}{\tilde}
\begin{document}
\title{Regularity of the leafwise Poincar\'{e} metric on singular holomorphic foliations}

\keywords{Poincar\'{e} metric, singular Riemann surface foliation, tangent cone}
\thanks{*The author is supported by the Labex CEMPI (ANR-11-LABX-0007-01)}
\subjclass{Primary: 32M25, 32S65  ; Secondary : 30F45}

\author{Sahil Gehlawat*, Kaushal Verma}

\address{SG: Universit\'{e} de Lille, Laboratoire de Math\'{e}matiques Paul Painlev\'{e}, CNRS U.M.R. 8524, 59655
Villeneuve d’Ascq Cedex, France.}
\email{sahil.gehlawat@univ-lille.fr}

\address{KV: Department of Mathematics, Indian Institute of Science, Bangalore 560 012, India}
\email{kverma@iisc.ac.in}

\dedicatory{In Memoriam: Nessim Sibony}

\begin{abstract} 
Let $\mathcal F$ be a smooth Riemann surface foliation on $M \setminus E$, where $M$ is a complex manifold and the singular set $E \subset M$ is an analytic set of codimension at least two. Fix a hermitian metric on $M$ and assume that all leaves of $\mathcal F$ are hyperbolic. Verjovsky's modulus of uniformization $\eta$ is a positive real function defined on $M \setminus E$ defined in terms of the family of holomorphic maps from the unit disc $\mathbb D$ into the leaves of $\mathcal F$ and is a measure of the largest possible derivative in the class of such maps. Various conditions are known that guarantee the continuity of $\eta$ on $M \setminus E$. The main question that is addressed here is its continuity at points of $E$. To do this, we adapt Whitney's $C_4$-tangent cone construction for analytic sets to the setting of foliations and use it to define the tangent cone of $\mathcal F$ at points of $E$. This leads to the definition of a foliation that is of {\it transversal type} at points of $E$. It is shown that the map $\eta$ associated to such foliations is continuous at $E$ provided that it is continuous on $M \setminus E$ and $\mathcal F$ is of transversal type. We also present observations on the locus of discontinuity of $\eta$. Finally, for a domain $U \subset M$, we consider $\mathcal F_U$, the restriction of $\mathcal F$ to $U$ and the corresponding positive function $\eta_U$. Using the transversality hypothesis leads to strengthened versions of the results of Lins Neto--Martins on the variation $U \mapsto \eta_U$. 
\end{abstract}  

\maketitle 

\section{Introduction}

\noindent Let $\mathcal{F}$ be a singular holomorphic foliation by curves ({\sf shfc}) on a complex manifold $M$ with singular set $E \subset M$. The singular set $E \subset M$ is a complex analytic set of codimension at least $2$. For $p \in M \setminus E$, let $L_p$ be the leaf passing through $p$. The tangent line to $\mathcal F$ at $p$, denoted by $T_p\mathcal F$, consists of all vectors tangent to $L_p$ at $p$. The union of all such tangent lines 
\[
T \mathcal{F} := \bigcup_{p \in M \setminus E} T_{p} \mathcal{F}
\]
defines a holomorphic line bundle on $M\setminus E$. To fix notation, let ${\sf reg}(A)$ and ${\sf sng}(A)$ denote the smooth and singular loci, respectively, of a complex analytic set $A$. The tangent space at points of ${\sf reg}(A)$ is well defined; on the other hand, according to Whitney (see \cite{Ch}, \cite{Wh}), there are six different possible definitions of the tangent cone at points in ${\sf sng}(A)$. Each of these descriptions coincides with the (unambiguous) tangent space when applied to points in ${\sf reg}(A)$. We will focus on what is called Whitney's $C_4$-tangent cone in \cite{Ch} (see \cite{Wh} also) by first recalling its definition as applied to $E$. For $p \in {\sf sng}(E)$, let
\[
C_{p}E = \left\{v \in T_p M \mid \text{there exists} \ \{q_n\}_{n \ge 1} \subset {\sf reg}(E), v_n \in T_{q_n} E \ \text{such that} \ (q_n, v_n) \rightarrow (p, v) \right\}.
\]
At points $p \in {\sf reg}(E)$, the tangent cone to $E$ is just the tangent space $T_pE \subset T_pM$. This leads to the consideration of: 

\begin{defn}\label{D:TangentCone}
Let $\mathcal{F}$ be a {\sf shfc} on a complex manifold $M$ with singular set $E \subset M$. Let $p \in E$. Define the \textit{tangent cone of $\mathcal{F}$ at $p$} as
\[
C_{p} \mathcal{F} = \left\{v \in T_p M \mid \text{there exists} \ \{p_n\}_{n \ge 1} \subset M \setminus E, v_n \in T_{p_n} \mathcal{F} \ \text{such that} \ (p_n, v_n) \rightarrow (p, v)\right\}. 
\]
\end{defn}
For a given $v \in C_p \mathcal F$ and $\lambda \in \mathbb C$, let $(p_n, v_n) \in (M \setminus E) \times T_{p_n} \mathcal F$ converge to $(p, v)$. Note that $\lambda v_n \in T_{p_n} \mathcal F$ for all $n$ and that $(p_n, \lambda v_n) \in (M \setminus E) \times T_{p_n} \mathcal F$ converges to $(p, \lambda v)$. This shows that $C_p \mathcal F$ is actually a cone with vertex at $p$.

\medskip

A basic question is to understand how the leaves of a given foliation accumulate on its singular set. Keeping this in mind, we consider:

\begin{defn}\label{D:Transversality}
Let $\mathcal{F}$ be a {\sf shfc} on a complex manifold $M$ with singular set $E \subset M$. Say that $\mathcal{F}$ is of \textit{transversal type at $p \in E$} if there exists a neighourhood $U_{p} \subset M$ of $p$ such that for all $q \in U_{p} \cap E$
\[
\overline{C_{q} \mathcal{F}} \cap C_{q} E = \{0\}.
\]

\no We will say that $\mathcal{F}$ is of \textit{transversal type} if it is of transversal type at each $p \in E$. The following examples will be helpful in illustrating this notion.
\end{defn}

\begin{exam}\label{E:E1.3}
Let $X$ be a holomorphic vector field defined on the bidisc $M = \mbb{D} \times \mbb{D} \subset \mbb{C}^2$, with singular set $E = X(0) = \{0\}$. Let $\lambda_1, \lambda_2$ be the eigenvalues of the linear part $X^{(1)}$ of the vector field $X$, and suppose that 
$\alpha = {\lambda_1}/{\lambda_2} \neq 0$ is its index. Consider the following local normal forms of $X$ near the origin:
\begin{enumerate}

\item If $\alpha \notin \mbb{R}^{-} \cup \{0\}$ and $\alpha, \alpha^{-1} \notin \mbb{N}$,
\[
X = x\frac{\partial}{\partial x} + \alpha y \frac{\partial}{\partial y}.
\]

\item If $\alpha \in \mbb{R}^{-} \sm \{0\}$,
\[
X = x\frac{\partial}{\partial x} + \alpha y(1+ f(x,y)) \frac{\partial}{\partial y},
\]
where $f(x,y)$ is a holomorphic function defined on a neighourhood of the origin with the property that both $x,y \mid f(x,y)$.

\item If $\alpha = n \in \mbb{N}$ or $\alpha^{-1} = n \in \mbb{N}$,
\[
X = x\frac{\partial}{\partial x} + (ny + ax^n) \frac{\partial}{\partial y} \; \;  \text{or} \; \; X = (nx+ ay^n)\frac{\partial}{\partial x} + y \frac{\partial}{\partial y}
\]
respectively, where $a \in \mathbb C$ is a constant.


\end{enumerate}

\medskip

\no Let us now show that $C_{0}\cal{F}_{X} = \mbb{C}^2$ in each of these cases. 

\medskip
\begin{enumerate}

\item Let $V = (v_1,v_2) \in \mbb{C}^2 \sm \{0\}$. Define $z_m = (\frac{v_1}{m}, \frac{v_2}{\alpha m})$ for $m \ge 1$. Then $z_m$ converges to the origin and $V_m = m X(z_m) \in T_{z_m}\cal{F}_{X}$. But note that
\[
m X(z_m) = m \Big(\frac{v_1}{m}\frac{\partial}{\partial x} + \alpha \frac{v_2}{\alpha m} \frac{\partial}{\partial y}\Big) = V
\]
and this shows that $V \in C_{0}\cal{F}_{X}$. 

\medskip

\item Let $V=(v_1, v_2) \in \mbb{C}^2 \sm \{0\}$. For each $m \in \mbb{N}$,  
\[
h_{m}(y) = y \Big(1 + f\Big(\frac{v_1}{m}, y\Big)\Big)
\]
is holomorphic in $y$. Since $f(0,0)= 0$, let $P(r) \subset U$ be the $r-$radius bidisc such that $\vert f(x,y)\vert < {1}/{2}$, for all $(x,y) \in P(r)$. For large enough $m$ and $y \in D(0,r)$, note that $(\frac{v_1}{m}, y) \in P(r)$. Therefore,
\[
\vert h_{m}(y)\vert^2 > \vert y\vert^2 \; \Big(1 - \vert f\Big(\frac{v_1}{m}, y\Big)\vert^2\Big) > \frac{1}{2} \vert y\vert^2.
\]

This tells us that there exist $0 < r_1 < r$ such that $D(0,r_1) \subset h_{m}(D(0,r))$ for all $m$ large enough. Choose $N \ge 1$ such that $D(0,r_1) \subset h_{m}(D(0,r))$ and $\frac{v_2}{\alpha m} \in D(0, r_1)$ for all $m \ge N$. Let $y_m \in D(0, r)$ be such that $h_m(y_m) = \frac{v_2}{\alpha m}$. Observe that
\[
0 \le \frac{\vert y_m\vert^2}{2} < \vert h_m(y_m)\vert^2 = \Big(\frac{v_2}{\alpha m}\Big)^2 \to 0,
\]
and $y_m \neq 0$ if $v_2 \neq 0$. Therefore $y_m \to 0$. Take $z_{m} = (\frac{v_1}{m}, y_m) \in M \sm \{0\}$ and 
\[
V_m = m X(z_m) = m\Big(\frac{v_1}{m}\Big) \frac{\partial}{\partial x} + m  \alpha \; h_{m}(y_m) \frac{\partial}{\partial y} = (v_1, v_2) = V.
\]
Therefore, $C_{0}\cal{F}_{X} = \mbb{C}^2$.

\medskip

\item Suppose $\alpha = n \in \mbb{N}$ and $V = (v_1, v_2) \in \mbb{C}^2$. For $m \ge 1$, let 
\[
z_m = \Big(\frac{v_1}{m}, \frac{1}{m n} \Big(v_2 - \frac{a v_{1}^n}{m^{n-1}}\Big)\Big)
\]
and note that $z_m \to 0$ as $m \to +\infty$. Furthermore, note that
\[
m X(z_m) = m\Big(\frac{v_1}{m}, n \frac{1}{m n} \Big(v_2 - \frac{a v_{1}^n}{m^{n-1}}\Big) + a\Big(\frac{v_1}{m}\Big)^n\Big) = \Big(v_1, v_2 - \frac{a v_{1}^n}{m^{n-1}} + am\Big(\frac{v_1}{m}\Big)^n\Big)
\]
which shows that $m X(z_m) = (v_1, v_2) = V$ if $n\ge1$ and converges to $V$ as $m \to +\infty$. Since $V_m = m X(z_m) \in T_{z_m}\cal{F}_{X}$, it follows that $V \in C_{0}\cal{F}_{X}$. Therefore, $C_{0}\cal{F}_{X} = \mbb{C}^2$. The case $\alpha^{-1} \in \mbb{N}$ is similar.

\end{enumerate}

\end{exam}

\medskip

\begin{exam}\label{E:E1.4}
Consider the holomorphic vector field
\[
X(x,y,z) = x \frac{\partial}{\partial x} + h(z)y \frac{\partial}{\partial y}
\]
defined on the unit polydisc $M = \mathbb D^3 \subset \mbb C^3$; here, $h$ is a non-vanishing holomorphic function. In this case, $E = \{X = 0\} = \{(0,0,z) \in M\}$. 

\medskip
\no We will show that for each $p = (0,0,c) \in E$, $C_{p}\cal{F}_{X} = \langle e_1, e_2\rangle$, where $\langle e_1, e_2\rangle \backsimeq \mbb C^2$ is the complex subspace of $\mbb{C}^3$ spanned by the vectors $e_1 = (1,0,0)$ and $e_2 = (0,1,0)$. To see that $C_{p}\cal{F}_{X} \subset \langle e_1, e_2\rangle$, note that if $v = (v_1, v_2, v_3) \in T_{z}\cal{F}_{X}$ for any $z \in M \sm E$, then $v_3 = 0$. Therefore, if $V = (v_1, v_2, v_3) \in C_{p}\cal{F}_{X}$, then $v_3 = 0$ and so $V \in \langle e_1, e_2\rangle$.

\medskip
\no Now observe that $\Sigma_{c} = \{z = c\}$ is $\cal{F}_{X}-$invariant for every $c \in \mbb{D}$. Consider the vector field $X_{c} = X\vert_{\Sigma_{c}}$ 
\[
X_{c} = x \frac{\partial}{\partial x} + h(c)y\frac{\partial}{\partial y}
\]
defined on $M_{c} = \mbb{D}^2 \subset \mbb{C}^2$ with $E_{c} = \{X_{c} = 0\} = \{0\}$. Since $h(c) \neq 0$, the calculations in the previous examples show that $C_{0}\cal{F}_{X_{c}} = \mbb{C}^2 = \langle e_1, e_2\rangle$. Also, it is clear that $C_{0}\cal{F}_{X_{c}} \subset C_{p}\cal{F}_{X}$. Therefore $\langle e_1, e_2\rangle \subset C_{p}\cal{F}_{X}$. 

\medskip 
\no Finally, 
\[
C_{p}E \cap \overline{C_{p}\cal{F}_{X}} = \{0\}.
\]
since $C_{p}E = \langle e_3\rangle$ for all $p \in E$, and this shows that $\cal{F}_{X}$ is transversal type at each $p \in E$.

\end{exam}

\medskip

\begin{exam}\label{E:E1.5}
Let $\cal{F}_X$ be the holomorphic foliation induced by the vector field 
\[
X = x\frac{\partial}{\partial x} + zy \frac{\partial}{\partial y} + zy \frac{\partial}{\partial z}
\]
on the open polydisc $M \subset \mbb C^3$ around the origin with multi-radius $r = (r_1,r_2,r_3)$. The singular set of $\cal{F}_{X}$ is $E = \{(0,y,z) \in M : yz = 0\} = \{y-\text{axis}\}\cup \{z-\text{axis}\}$.

\medskip

\no We will show that for each $p \in E$, $C_{p}\cal{F}_{X} = \langle (1,0,0), (0,1,1)\rangle$, and $\cal{F}_{X}$ is transversal type.

\medskip
For $p \in E$, note that $C_{p}\cal{F}_{X} \subset \langle (1,0,0), (0,1,1)\rangle$. Indeed, for any $v = (v_1, v_2, v_3) \in T_{z}\cal{F}_{X}$ where $z \in M \sm E$, we must have $v_2 = v_3$. Therefore, $v \in \langle (1,0,0), (0,1,1)\rangle$, and so if $V \in C_{p}\cal{F}_{X}$, then $V \in \langle (1,0,0), (0,1,1)\rangle$.

\medskip
Suppose now that $V = (v_1, v_2,v_2) \in \langle (1,0,0), (0,1,1)\rangle \sm \{0\}$. There are three cases to consider:

\begin{itemize}
\item Let $p = (0,c,0) \in E$, where $c \in D(0,r_2) \setminus \{0\}$. Consider the sequence of points $z_n = (\frac{v_1}{n}, c, \frac{v_2}{c n}) \in M \sm E$, and the vectors
\[
V_n = n X(z_n) = n\Big(\frac{v_1}{n}, c \frac{v_2}{c n}, c \frac{v_2}{c n}\Big) = (v_1, v_2, v_2) = V.
\]
Since $V = V_n \in T_{z_n}\cal{F}_{X}$ and $z_n \to p$, it follows that $V \in C_{p}\cal{F}_{X}$. Hence,
\[
\langle (1,0,0), (0,1,1)\rangle = C_{p}\cal{F}_{X} = \overline{C_{p}\cal{F}_{X}}.
\]
Further, since $C_{p}E = \langle (0,1,0)\rangle$, it follows that
\[
C_{p}E \cap \overline{C_{p}\cal{F}_{X}} = \langle (0,1,0)\rangle \cap \langle (1,0,0), (0,1,1)\rangle = \{0\}.
\]
As this is true for all $p = (0, c, 0)$ with $c \in D(0,r_2) \setminus \{0\}$, take a neighbourhood $U_{p}$ of $p$ so that every $q \in U_{p} \cap E$ is of the form $q = (0,y,0)$ with $y \not= 0$. Therefore, $\cal{F}_{X}$ is transversal type at $p \in E$.

\medskip

\item Let $p = (0,0,c) \in E$, where $c \in D(0,r_3) \setminus \{0\}$. Consider the sequence of points $z_n = (\frac{v_1}{n}, \frac{v_2}{c n}, c) \in M \sm E$, and the vectors
\[
V_n = n X(z_n) = n\Big(\frac{v_1}{n}, c \frac{v_2}{c n}, c \frac{v_2}{c n}\Big) = (v_1, v_2, v_2) = V.
\]
Since $V = V_n \in T_{z_n}\cal{F}_{X}$ and $z_n \to p$, it follows that $V \in C_{p}\cal{F}_{X}$. Hence,
\[
\langle (1,0,0), (0,1,1)\rangle = C_{p}\cal{F}_{X} = \overline{C_{p}\cal{F}_{X}}.
\]
Further, since $C_{p}E = \langle (0,0,1)\rangle$, it follows that
\[
C_{p}E \cap \overline{C_{p}\cal{F}_{X}} = \langle (0,0,1)\rangle \cap \langle (1,0,0), (0,1,1)\rangle = \{0\}.
\]
As this is true for all $p = (0, 0, c)$ with $c \in D(0,r_3) \setminus \{0\}$, take a neighbourhood $U_{p}$ of $p$ so that every $q \in U_{p} \cap E$ is of the form $q = (0,0,z)$ with $z \not= 0$. Therefore, $\cal{F}_{X}$ is transversal type at $p \in E$.

\medskip
\item Let $p = (0,0,0) \in E$. In this case, consider the sequence 
\[
z_n = \left(\frac{v_1}{n}, (\frac{v_2}{n})^{1/2}, (\frac{v_2}{n})^{1/2} \right) \in M \sm E 
\]
and the vectors
\[
V_n = n X(z_n) = n\Big(\frac{v_1}{n},  \frac{v_2}{n}, \frac{v_2}{n}\Big) = (v_1, v_2, v_2) = V.
\]
Since $V = V_n \in T_{z_n}\cal{F}_{X}$ and $z_n \to 0$, it follows that $V \in C_{0}\cal{F}_{X}$. Hence,
\[
\langle (1,0,0), (0,1,1)\rangle = C_{0}\cal{F}_{X} = \overline{C_{0}\cal{F}_{X}}.
\]
\no Further, since $C_{0}E = \langle (0,0,1)\rangle \cup \langle (0,1,0)\rangle$, it follows that
\[
C_{0}E \cap \overline{C_{0}\cal{F}_{X}} = \big(\langle (0,0,1)\rangle \cup \langle (0,1,0)\rangle\big) \cap \langle (1,0,0), (0,1,1)\rangle = \{0\}.
\]
Let $U_{0} = M$. By the previous observations, the transversality conditions holds for all $p \in E$, and this implies that $\cal{F}_{X}$ is transversal type at $0 \in E$.

\end{itemize}

\no Hence $\cal{F}_{X}$ is of transversal type.

\end{exam}

\medskip

In what follows, we will assume that $\mathcal F$ is hyperbolic. Thus, each leaf $L_p \subset \mathcal F$ is a hyperbolic Riemann surface. Fix a Riemannian metric $g$ on $M$ and let $\vert v \vert_g$ denote the length of a tangent vector $v$. Let $\mathcal O(\mathbb D, \mathcal F)$ denote the family of holomorphic maps $f$ from $\mathbb D$ that take values in a leaf $L \subset \mathcal F$. The modulus of uniformization $\eta : M \setminus E \to (0, \infty)$ given by
\[
\eta(z) = \sup \{ \vert f'(0) \vert_g : f \in \mathcal O(\mathbb D, \mathcal F), f(0) = z \} 
\]
was defined by Verjovsky \cite{V} and its continuity properties under suitable hypotheses on $E$ were considered by Lins Neto \cite{Ne1, Ne2}, Lins Neto--Martins \cite{NM}, Candel \cite{Ca}, and more recently by Dinh--Nguyen--Sibony \cite{DNS1, DNS2}. This note builds on \cite{GV}, which in turn was inspired by \cite{NM}. Recall that \cite{NM} provides several equivalent sufficient conditions for the continuity of $\eta$ on $M \setminus E$ for the case when $E$ is discrete.

\medskip

The aim of this note is two fold. First, to generalize some of the results in \cite{NM} for hyperbolic {\sf shfc}'s without restrictions on $E$ and to prove that if  $\mathcal F$ is of transversal type and $\eta$ is continuous on $M \setminus E$, then $\eta$ extends continuously to all of $M$. The techniques of the proofs are similar to the ones used in \cite{NM}. Second, let
\[
D_{\mathcal F} = \{ p \in M \setminus E: \eta \; \text{is discontinuous at} \; p \}
\]
be the discontinuity locus of $\eta$. Several properties of the relative location of $D_{\mathcal F}$ in $M \setminus E$ are obtained and some of these observations strengthen the results obtained earlier by Fornaess--Sibony \cite{FS}. 

\medskip

\no As in \cite{NM}, the space of uniformizations
\[
\mathcal{U} = \{\alpha \in \mathcal{O}(\mbb{D}, \mathcal{F}) \mid \alpha \ \text{is a uniformization of a leaf of} \ \mathcal{F}\}
\]
is the essential object to consider and we will say that $\mathcal U$ is NCP (Normal on compact parts) if for any family $\mathcal{H} \subset \mathcal{U}$ such that $\{\alpha(0) \mid \alpha \in \mathcal{H}\}$ is relatively compact in $M$, the family $\mathcal{H}$ is normal. This happens if $M$ is taut, for example.


\medskip
Before stating the main results, the following lemma describes the closure of $\cal{U}$ and will be used repeatedly.

\begin{lem}\label{L:LimitUnif}
Let $\mathcal{F}$ be a hyperbolic {\sf shfc} on a complex manifold $M$ with singular set $E \subset M$ of dimension $k \ge 0$. If $\{\alpha_n\}_{n \ge 1}$ is a sequence in $\mathcal{U}$ which converges on compact subsets of $\mbb{D}$ to $\alpha : \mbb{D} \to M$, then $\alpha(\mbb{D})\subset L \cup E$, where either $L = \emptyset$ or $L$ is a leaf of $\mathcal{F}$. In particular, if $L = \emptyset$ and $q \in \alpha(\mbb{D}) \subset E$ where $\mathcal{F}$ is transversal type at $q \in E$, then $\alpha \equiv q \in E$.
\end{lem}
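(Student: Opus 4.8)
The plan is to treat the two assertions separately, beginning with the structure of the image $\alpha(\mathbb{D})$. I would set $A = \alpha^{-1}(E)$; since $E$ is analytic in $M$ and $\alpha$ is holomorphic, $A$ is an analytic subset of $\mathbb{D}$, and because $\mathbb{D}$ is one-dimensional, $A$ is either all of $\mathbb{D}$ or a discrete set. In the first case $\alpha(\mathbb{D}) \subseteq E$ and we take $L = \emptyset$. In the second case $\mathbb{D} \setminus A$ is connected, and on it I claim $\alpha$ parametrizes part of a single leaf. Indeed, since $\alpha_n \to \alpha$ uniformly on compact subsets of $\mathbb{D}$, Cauchy estimates in local coordinates give $\alpha_n' \to \alpha'$ uniformly on compact subsets as well. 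For $t \in \mathbb{D} \setminus A$ we have $\alpha(t) \in M \setminus E$; each $\alpha_n$ takes values in a leaf, so $\alpha_n'(t) \in T_{\alpha_n(t)}\mathcal{F}$, and since $T\mathcal{F}$ is a continuous (holomorphic) line field on $M \setminus E$, passing to the limit yields $\alpha'(t) \in T_{\alpha(t)}\mathcal{F}$. Thus $\alpha|_{\mathbb{D}\setminus A}$ is an integral curve of $\mathcal{F}$, and by connectedness its image lies in a single leaf $L$. In either case $\alpha(\mathbb{D}) \subseteq L \cup E$.

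For the second assertion, suppose $L = \emptyset$, so that $\alpha(\mathbb{D}) \subseteq E$, and suppose $q = \alpha(t_0)$ is a point at which $\mathcal{F}$ is of transversal type, with associated neighbourhood $U_q$. By continuity there is $\delta > 0$ with $\alpha(t) \in U_q \cap E$ whenever $|t - t_0| < \delta$. The strategy is to show that for each such $t$ the vector $\alpha'(t)$ lies in both $\overline{C_{\alpha(t)}\mathcal{F}}$ and $C_{\alpha(t)}E$, and then to invoke transversality. The first membership follows exactly as above: taking $p_n = \alpha_n(t) \in M \setminus E$ and $v_n = \alpha_n'(t) \in T_{\alpha_n(t)}\mathcal{F}$, we have $(p_n, v_n) \to (\alpha(t), \alpha'(t))$, so by Definition \ref{D:TangentCone}, $\alpha'(t) \in C_{\alpha(t)}\mathcal{F} \subseteq \overline{C_{\alpha(t)}\mathcal{F}}$.

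The main obstacle is the second membership, $\alpha'(t) \in C_{\alpha(t)}E$, which I would establish from the fact that $\alpha$ is a holomorphic curve lying entirely in $E$. Writing $p = \alpha(t)$ and expanding $\alpha(t + s) = p + s\,\alpha'(t) + O(s^2)$, the chord vectors $\big(\alpha(t+s) - p\big)/s$ join points of $E$ to $p$ and converge to $\alpha'(t)$ as $s \to 0$; hence $\alpha'(t)$ belongs to Whitney's secant tangent cone $C_3(E, p)$ (the case $\alpha'(t)=0$ being trivial, since $0 \in C_pE$ always). Invoking the standard inclusion $C_3(E,p) \subseteq C_4(E,p) = C_pE$ for complex analytic sets (see \cite{Ch}, \cite{Wh}) then gives $\alpha'(t) \in C_{\alpha(t)}E$. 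Combining the two memberships with the transversality relation $\overline{C_{\alpha(t)}\mathcal{F}} \cap C_{\alpha(t)}E = \{0\}$, which holds throughout $U_q \cap E$, forces $\alpha'(t) = 0$ for all $|t - t_0| < \delta$. By the identity theorem $\alpha' \equiv 0$ on $\mathbb{D}$, so $\alpha \equiv \alpha(t_0) = q$, as claimed.
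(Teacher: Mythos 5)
Your proof is correct and follows essentially the same route as the paper: decompose via $\alpha^{-1}(E)$ (discrete or all of $\mathbb{D}$), identify the leaf on the complement, and in the totally-degenerate case derive a contradiction with transversality by showing a nonzero $\alpha'(t)$ would lie in both $\overline{C_{\alpha(t)}\mathcal{F}}$ and $C_{\alpha(t)}E$. The one place you go beyond the paper is in justifying $\alpha'(t)\in C_{\alpha(t)}E$ via the secant cone and Whitney's inclusion $C_3\subseteq C_4$ for analytic sets — a step the paper simply asserts — and that justification is sound.
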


\begin{proof}
Consider the open set $U = \{z \in \mbb{D} : \alpha(z) \notin E\}$. Define for $z_0 \in U$, the set $U_{z_0} = \{z \in U : L_{z} = L_{z_0}\}$. Using the fact that $\alpha$ is a local immersion at the points of $U$ and local trivializations of $\mathcal F$ outside $E$, it can be seen that $U_{z_0}$ is open in $\mbb{D}$.

\medskip

Let $F = \alpha^{-1}(E)$. It suffices to show that if $F$ is non-empty, then either $F$ is a discrete set or $F = \mbb{D}$. Suppose $F$ is not discrete and $z \in D$ is a limit point of $F$. Since $F$ is closed, $z \in F$. Consider the point $\alpha(z) = p \in E$. Since $E$ is an analytic subset of $M$, there exist local defining functions $f_1, f_2, \ldots, f_m$ for $E$ near $p$, i.e., there exists a neighourhood $V \subset M$ of $p$ such that $V \cap E = \{f_1 = f_2 = \ldots = f_m = 0\}$. Let $r >0$ be such that $\alpha(D(z, r)) \subset V$. Consider the holomorphic functions $f_1 \circ \alpha\vert_{D(z,r)}, f_2 \circ \alpha\vert_{D(z,r)}, \ldots, f_m \circ \alpha\vert_{D(z,r)}$ that are defined on the disc $D(z,r)$. Since $z$ is an accumulation point of the zero set $\mathcal{Z}(f_i \circ \alpha\vert_{D(z,r)})$ in $D(z,r)$, it follows that $f_i \circ \alpha\vert_{D(z,r)} \equiv 0$ on $D(z,r)$ for all $1 \le i \le m$. Thus, $D(z,r) \subset F$ and a connectedness argument shows that $F = \mbb{D}$.

\medskip
If $L = \emptyset$, then $\alpha(\mathbb D) \subset E$. Suppose that $q \in \alpha(\mbb{D}) \subset E$ is such that $\mathcal{F}$ is transversal type at $q$. We need to show that $\alpha$ is a constant map. Suppose not. Let $z_0 \in \mbb{D}$, $r > 0$ be such that $\alpha(z_0) = q$ and $\alpha(D(z_0,r)) \subset U_{q}$. Since $\alpha\vert_{D(z_0, r)} : D(z_0, r) \rightarrow U_q \cap E$ is non-constant, let $z_1 \in D(z_0,r)$ be such that $\alpha'(z_1) \neq 0$, and since $\alpha_{n}'(z_1) \rightarrow \alpha'(z_1)$, therefore $\alpha'(z_1) \in C_{\alpha(z_1)}\mathcal{F}$. Also since $\alpha(\mbb{D}) \subset E$, therefore $\alpha'(z_1) \in C_{\alpha(z_1)}E$, which is a contradiction to the transversality property of $\cal{F}$ in the neighourhood $U_{q}$. Therefore, $\alpha \equiv q \in E$. 
\end{proof}

\medskip

By using Lemma \ref{L:LimitUnif} in conjunction with exactly the same set of ideas as in the proof of Proposition $3$ in \cite{NM} gives the following set of equivalent conditions for the continuity of $\eta$ on $M \setminus E$. The details are omitted.

\begin{thm}\label{T:EquCont}
Let $\mathcal{F}$ be a hyperbolic SHFC on a complex manifold $M$, with singular set $E \subset M$. Also, let $g$ be a given hermitian metric on $M$, and $\eta$ be the modulus of uniformization map of $\mathcal{F}$. Suppose that $\mathcal{U}$ is NCP. Then the following are equivalent:

\begin{enumerate}
\item $\eta$ is continuous in $M \setminus E$.
\item For any sequence $\{\alpha_n\}_{n \ge 1}$ in $\mathcal{U}$, which converges in the compact parts of $\mbb{D}$ to some $\alpha : \mbb{D} \rightarrow M$, and $p = \alpha(0) \notin E$, then $\alpha(\mbb{D}) \subset L_{p}$, where $L_p$ is the leaf of $\mathcal{F}$ passing through $p$. 
\item For any sequence $\{\alpha_n\}_{n \ge 1}$ in $\mathcal{U}$, which converges in the compact parts of $\mbb{D}$ to some $\alpha : \mbb{D} \rightarrow M$, and $p = \alpha(0) \notin E$, then $\alpha$ is a uniformization of $L_p$.
\item For any sequence $\{\alpha_n\}_{n \ge 1}$ in $\mathcal{U}$, which converges in the compact parts of $\mbb{D}$ to some $\alpha : \mbb{D} \rightarrow M$, and $p = \alpha(0) \in E$, then $\alpha(\mbb{D}) \subset E$.
\end{enumerate}
\end{thm}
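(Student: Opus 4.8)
The plan is to prove the four statements equivalent by establishing the chain $(1)\Rightarrow(3)\Rightarrow(2)\Rightarrow(1)$ among the first three and then $(2)\Leftrightarrow(4)$ separately. Two preliminary facts drive everything. First, the supremum defining $\eta$ is attained precisely by the uniformizations: if $\pi_z\in\cal{U}$ is a universal covering of $L_z$ with $\pi_z(0)=z$, then lifting any competitor $f\in\cal{O}(\mbb{D},\cal{F})$ with $f(0)=z$ through $\pi_z$ and applying the Schwarz--Pick lemma gives $\vert f'(0)\vert_g\le\vert\pi_z'(0)\vert_g$, so $\eta(z)=\vert\pi_z'(0)\vert_g$. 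Reparametrizing a uniformization $\phi$ by the automorphism $\psi_\zeta(w)=(w+\zeta)/(1+\bar\zeta w)$ shows that $\phi\circ\psi_\zeta$ is again a uniformization based at $\phi(\zeta)$, whence the pointwise identity
\[
\eta(\phi(\zeta))=\vert\phi'(\zeta)\vert_g\,(1-\vert\zeta\vert^2)\qquad(\zeta\in\mbb{D})
\]
holds for every uniformization $\phi$. Second, $\eta$ is always lower semicontinuous on $M\sm E$: fixing $r<1$ and transporting the restriction $\pi_z\vert_{D(0,r)}$ by the holonomy of $\cal{F}$ over the compact set $\pi_z(\overline{D(0,r)})\subset M\sm E$ produces, for $z'$ near $z$, competitors showing $\liminf_{z'\to z}\eta(z')\ge r\,\eta(z)$, and then $r\to1$.

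The heart of the matter is $(1)\Rightarrow(3)$. Suppose $\eta$ is continuous and $\alpha_n\to\alpha$ with $p=\alpha(0)\notin E$. By Lemma~\ref{L:LimitUnif}, $\alpha(\mbb{D})\subset L_p\cup E$. Applying the identity above to each uniformization $\alpha_n$ and letting $n\to\infty$, continuity of $\eta$ at the points $\alpha(\zeta)\notin E$ yields
\[
\vert\alpha'(\zeta)\vert_g\,(1-\vert\zeta\vert^2)=\eta(\alpha(\zeta))
\]
for every $\zeta$ with $\alpha(\zeta)\notin E$. Choosing $\delta>0$ so that $\alpha(D(0,\delta))\subset L_p$ and lifting $\alpha\vert_{D(0,\delta)}=\pi_p\circ\ti\alpha$ through a uniformization $\pi_p$ of $L_p$, I would substitute the identity for $\pi_p$ into the displayed relation and cancel the common positive factor $\vert\pi_p'(\ti\alpha(\zeta))\vert_g$; this forces $\vert\ti\alpha'(\zeta)\vert(1-\vert\zeta\vert^2)=1-\vert\ti\alpha(\zeta)\vert^2$, the equality case of Schwarz--Pick. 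Hence $\ti\alpha$ is the restriction of an automorphism $T\in\mathrm{Aut}(\mbb{D})$, and by the identity theorem $\alpha\equiv\pi_p\circ T$ on all of $\mbb{D}$. Thus $\alpha$ is a uniformization of $L_p$, and in particular never meets $E$, which proves $(3)$.

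The remaining implications are comparatively soft. $(3)\Rightarrow(2)$ is immediate, since a uniformization of $L_p$ has image $L_p$. For $(2)\Rightarrow(1)$, lower semicontinuity is already in hand, while upper semicontinuity follows from $(2)$: given $z_n\to z$ off $E$, the NCP hypothesis extracts a convergent subsequence of the uniformizations $\pi_{z_n}$ with limit $\alpha$, $\alpha(0)=z$; by $(2)$ one has $\alpha(\mbb{D})\subset L_z$, so the extremal bound $\vert\alpha'(0)\vert_g\le\eta(z)$ gives $\limsup\eta(z_n)\le\eta(z)$. Finally $(2)\Leftrightarrow(4)$ is a reparametrization argument. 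If a limit $\alpha$ violated one of the two containments, there would be a point $w\in\mbb{D}$ with $\alpha(w)$ on the ``wrong side'' of $E$ relative to $\alpha(0)$; precomposing the $\alpha_n$ with an automorphism sending $0$ to $w$ produces a new sequence in $\cal{U}$ whose limit $\alpha\circ\psi_w$ is based at $\alpha(w)$, and the complementary condition then forces $\alpha(\mbb{D})=\alpha(\psi_w(\mbb{D}))$ to lie entirely in $E$ or entirely off $E$, contradicting the position of $\alpha(0)$.

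The main obstacle is exactly $(1)\Rightarrow(3)$, namely ruling out that a limit of uniformizations emanating from $M\sm E$ secretly grazes $E$. Semicontinuity and normality alone bound $\vert\alpha'(0)\vert_g$ from only one side and cannot by themselves detect such grazing; it is the pointwise identity $\eta(\phi(\zeta))=\vert\phi'(\zeta)\vert_g(1-\vert\zeta\vert^2)$ that upgrades mere continuity of $\eta$ into a rigidity statement (saturation of Schwarz--Pick along the whole disc) strong enough to force $\alpha$ to be a genuine covering. I expect the only technical point requiring care to be the holonomy-transport construction underlying lower semicontinuity; the passage to the limit in the identity, being carried out on the fixed disc $D(0,\delta)$ where $\alpha$ stays off $E$, is routine.
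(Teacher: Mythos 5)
Your architecture ($(1)\Rightarrow(3)\Rightarrow(2)\Rightarrow(1)$ together with $(2)\Leftrightarrow(4)$ via precomposition with automorphisms) is sound, and the ingredients you use --- extremality of uniformizations, the identity $\eta(\phi(\zeta))=\vert\phi'(\zeta)\vert_g(1-\vert\zeta\vert^2)$, lower semicontinuity, NCP, and Lemma~\ref{L:LimitUnif} --- are exactly the circle of ideas the paper points to (the paper omits the proof altogether, deferring to Proposition~3 of Lins Neto--Martins). The implications $(3)\Rightarrow(2)$, $(2)\Rightarrow(1)$ and $(2)\Leftrightarrow(4)$ are correct as written.

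The one step whose justification does not hold up as stated is the rigidity claim inside $(1)\Rightarrow(3)$. Your lift $\tilde\alpha$ is defined only on $D(0,\delta)$, not on all of $\mathbb{D}$, and the equality case of Schwarz--Pick is a statement about maps defined on the whole disc: a holomorphic $g:D(0,\delta)\to\mathbb{D}$ with $g(0)=0$ and $\vert g'(0)\vert=1$ (for instance $\zeta\mapsto\zeta+100\zeta^{2}$ on a very small disc) need not be the restriction of a rotation, so saturation at a point proves nothing here. What saves the argument is that you have the identity $\vert\tilde\alpha'(\zeta)\vert(1-\vert\zeta\vert^{2})=1-\vert\tilde\alpha(\zeta)\vert^{2}$ on the whole open set $D(0,\delta)$, i.e.\ $\tilde\alpha$ pulls back the Poincar\'e metric of $\mathbb{D}$ to itself there; a holomorphic local isometry is determined by its $1$-jet (compare power series, or invoke rigidity of local Riemannian isometries between real-analytic metrics), so $\tilde\alpha$ coincides with the automorphism $T$ having the same $1$-jet at $0$, and your identity-theorem step then goes through. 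You should either supply this argument explicitly or replace it by a cleaner route: the identity on $\mathbb{D}\setminus F$, $F=\alpha^{-1}(E)$ discrete, says $\alpha$ is a local isometry from $(\mathbb{D},\lambda_{\mathbb{D}})$ into $(L_{p},\lambda_{L_{p}})$ off $F$; a path approaching a point $w\in F$ has finite $\lambda_{\mathbb{D}}$-length, hence its image has finite $\lambda_{L_{p}}$-length and converges in $L_{p}$ by completeness of the leafwise Poincar\'e metric, contradicting $\alpha(w)\in E$. Either way $F=\emptyset$, the lift becomes global, and the single equality $\vert\tilde\alpha'(0)\vert=1$ finishes the proof via the genuine Schwarz lemma.
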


\medskip
\no The next result shows that the transversality condition is sufficient for the continuous extension of the map $\eta$ to the singular part.

\begin{thm}\label{T:ContExt}
Let $\mathcal{F}$ be a hyperbolic {\sf shfc} on a complex manifold $M$ with singular set $E \subset M$. Suppose that $\mathcal{U}$ is NCP and $\eta$ is continuous on $M \setminus E$. If $\cal{F}$ is transversal type at some $p \in E$, then $\eta$ extends continuously to the point $p \in E$. In fact, if $\cal{F}$ is transversal type at each point of $E$, then $\eta$ has a continuous extension to all of $M$.
\end{thm}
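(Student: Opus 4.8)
The plan is to define the extension by setting $\eta(p) = 0$ at each $p \in E$ and to prove that $\eta(z) \to 0$ as $z \to p$ for every $p \in E$ at which $\mathcal F$ is of transversal type; continuity on all of $M$ then follows by combining this limit with the continuity already assumed on $M \setminus E$ and the fact that the extended $\eta$ vanishes identically on $E$. The first step I would record is the extremal property of uniformizations: for $z \in M \setminus E$, if $\alpha$ is the uniformization of the leaf $L_z$ normalized so that $\alpha(0) = z$, then lifting any competitor $f \in \mathcal O(\mathbb D, \mathcal F)$ with $f(0)=z$ through the universal cover $\alpha : \mathbb D \to L_z$ and applying the Schwarz lemma gives $|f'(0)|_g \le |\alpha'(0)|_g$, with equality for $\alpha$ itself. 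Hence $\eta(z) = |\alpha'(0)|_g$, and controlling $\eta(z_n)$ along a sequence $z_n \to p$ reduces to controlling $|\alpha_n'(0)|_g$ for the associated uniformizations $\alpha_n \in \mathcal U$ with $\alpha_n(0) = z_n$.

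The core of the argument is a contradiction obtained by a normal-families extraction. Suppose $z_n \to p$ but $\eta(z_n) = |\alpha_n'(0)|_g \ge \delta > 0$ along some subsequence. Since $\{\alpha_n(0)\} = \{z_n\}$ is relatively compact and $\mathcal U$ is NCP, the family $\{\alpha_n\}$ is normal, so after passing to a further subsequence $\alpha_n \to \alpha$ uniformly on compact subsets of $\mathbb D$, where $\alpha(0) = \lim z_n = p \in E$. Because $\eta$ is continuous on $M \setminus E$, condition (4) of Theorem~\ref{T:EquCont} applies to $\alpha_n \to \alpha$ and forces $\alpha(\mathbb D) \subset E$; equivalently, the leaf $L$ furnished by Lemma~\ref{L:LimitUnif} is empty. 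Since $\mathcal F$ is of transversal type at $p = \alpha(0) \in \alpha(\mathbb D) \subset E$, the final assertion of Lemma~\ref{L:LimitUnif} yields $\alpha \equiv p$. Convergence on compact parts upgrades to convergence of derivatives, so $|\alpha_n'(0)|_g \to |\alpha'(0)|_g = 0$, contradicting the lower bound $\delta$. A standard subsequence-of-a-subsequence argument then promotes this to $\eta(z_n) \to 0$ for the full sequence, and as $z_n \to p$ was arbitrary we conclude $\lim_{z \to p} \eta(z) = 0$.

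For the global statement I would simply observe that transversality at every point of $E$ lets the preceding argument be run at each $p \in E$; together with $\eta \equiv 0$ on $E$, this gives continuity of the extended $\eta$ at every point of $E$ whether the approaching sequence lies in $E$ or in $M \setminus E$, and hence continuity on all of $M$. Note that the hypothesis ``transversal type at $p$'' already supplies a neighbourhood of $p$ in which every point of $E$ is transversal, which is exactly what Lemma~\ref{L:LimitUnif} requires at the base point.

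The step I expect to demand the most care is the extraction itself: verifying that the $\alpha_n$ can genuinely be normalized so that $\alpha_n(0) = z_n$ while still realizing $\eta(z_n)$, and confirming that the limit base point is \emph{exactly} $p$, so that both the continuity hypothesis (through condition (4) of Theorem~\ref{T:EquCont}) and the transversality hypothesis (through Lemma~\ref{L:LimitUnif}) can be invoked at the same point. The genuinely substantial content has been front-loaded into those two results, so the remaining work is the assembly described above.
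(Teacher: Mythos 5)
Your proposal is correct and follows essentially the same route as the paper: assume a sequence $z_n \to p$ with $\eta(z_n)$ bounded below, extract a locally uniform limit $\alpha$ of the uniformizations via NCP, invoke condition (4) of Theorem~\ref{T:EquCont} (available since $\eta$ is continuous on $M \setminus E$) to get $\alpha(\mbb{D}) \subset E$, and then use transversality at $p$ through Lemma~\ref{L:LimitUnif} to force $\alpha \equiv p$, contradicting the derivative bound. The only differences are presentational (you spell out the Schwarz-lemma extremality of uniformizations and the subsequence-of-subsequences step, which the paper leaves implicit), so no further comment is needed.
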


\begin{proof}
Define $\tilde{\eta} : M \rightarrow [0, +\infty)$ by $\tilde{\eta}(x) = \eta(x)$ for $x \in M \setminus E$, and $\tilde{\eta}(x) = 0$ for $x \in E$. Suppose that $\tilde{\eta}$ is not continuous at $p \in E$. Then there exist a sequence of points $\{p_n\}_{n \ge 1}$ in $M \setminus E$ such that $p_n \rightarrow p$, and $\eta(p_n) > \epsilon$ for some $\epsilon > 0$ and all $n \ge 1$. Let $\alpha_n$ be a uniformization of the leaf $L_{p_n}$ with $\alpha_n(0) = p_n$. Since $\mathcal{U}$ is NCP, there exists a subsequence $\{\alpha_{n_k}\}_{k \ge 1}$ which converges uniformly on compact subsets of $\mbb{D}$ to a map $\alpha : \mbb{D} \rightarrow M$. Since $\alpha(0) = p \in E$, Theorem~\ref{T:EquCont} shows that $\alpha(\mbb{D}) \subset E$. If $\alpha$ is non-constant, then it would contradict the transversality condition of $\mathcal{F}$ at $p \in E$ as in Lemma \ref{L:LimitUnif}. Therefore $\alpha \equiv p \in E$, and
\[
0 = \vert \alpha'(0)\vert^2 = \lim_{k \to +\infty}{\vert \alpha_{n_k}'(0)\vert^2} = \lim_{k \to +\infty}{\eta(p_{n_k})}
\]
which is a contradiction as $\eta(p_{n_k}) > \epsilon$ for all $n \ge 1$. Therefore, $\eta$ extends continuously to $p \in E$. If $\cal{F}$ is transversal type everywhere, the above argument shows that $\tilde{\eta}$ is continuous on $M$.

\end{proof}

\medskip
\noindent { \it Remark:} If the singular set $E$ of a hyperbolic {\sf shfc} $\mathcal{F}$ is discrete, then $C_{p}E = \{0\}$ for all $p \in E$. Therefore, $\overline{C_{p} \mathcal{F}} \cap C_{p}E = \{0\}$ for all $p \in E$ and this means that such an $\mathcal F$ is always of transversal type. In this case, $\eta$ always extends continuously to $E$ if it is known to be continuous on $M \setminus E$.


\no Consider the locus of discontinuity of $\eta$
\[
D_{\cal{F}} = \{p \in M \sm E : \eta \; \text{is discontinuous at} \; p\}
\]
corresponding to a hyperbolic {\sf shfc} $\mathcal F$.

\medskip

\begin{thm}\label{T:DiscSet}
Let $(M, \cal{F}, E)$ be as above and suppose that $\cal{U}$ is NCP. Then 
\begin{enumerate}
\item For each leaf $L$ of $\cal{F}$, the set $D_{\cal{F}} \cap L$ is either empty or  open in $L$.
\item If $\cal{F}$ is transversal type at $p \in E$ and $\eta$ does not extend continuously to $p$, then $D_{\cal{F}} \neq \emptyset$. In fact, there exists a sequence $p_n \in D_{\cal{F}}$ such that $p_n \to p$.
\end{enumerate}
\end{thm}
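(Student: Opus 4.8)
The plan is to prove the two parts separately, relying throughout on two facts about $\eta$ that come from the same normal-families methods already in play. First, $\eta$ is upper semicontinuous on $M\sm E$: if $q_n\to p\notin E$ and $\beta_n\in\mathcal U$ uniformize $L_{q_n}$ with $\beta_n(0)=q_n$, then after passing to a subsequence $\beta_n\to\alpha$ on compact parts with $\alpha(0)=p$, Lemma~\ref{L:LimitUnif} gives $\alpha(\mbb{D})\subset L_p\cup E$, and the extremality of the uniformization forces $\vert\alpha'(0)\vert_g\le\eta(p)$; hence $\lim\eta(q_n)=\vert\alpha'(0)\vert_g\le\eta(p)$. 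Second, $\eta$ is locally bounded below by a positive constant: near a regular point $p$ a foliated flow box exhibits every nearby leaf as containing a plaque, biholomorphic to $\mbb{D}$, through the given point, so by monotonicity of the Poincar\'e metric under the inclusion of this plaque the leafwise Poincar\'e density at the point stays bounded, whence $\eta\ge c>0$ on a neighborhood of $p$. Combined with the leafwise identity $\eta(\alpha_p(w))=\vert\alpha_p'(w)\vert_g(1-\vert w\vert^2)$ for the uniformization $\alpha_p$ of $L=L_p$, these show that $\eta$ is discontinuous at $p$ exactly when $\liminf_{q\to p}\eta(q)<\eta(p)$, and that a limit map $\alpha$ obtained from a sequence realizing a sub-$\eta(p)$ value satisfies $0<\vert\alpha'(0)\vert_g<\eta(p)$, so $\alpha$ is nonconstant.

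For part (1), fix $p\in D_{\mathcal F}\cap L$ and a nonconstant limit map $\alpha=\lim\beta_n$ as above. Since $\alpha(0)=p\notin E$, on a disc $D(0,\rho)$ on which $\alpha$ avoids $E$ it lifts through the universal covering $\alpha_p:\mbb{D}\to L$ as $\alpha=\alpha_p\circ\psi$ with $\psi(0)=0$; from $\vert\alpha'(0)\vert_g<\eta(p)$ one reads off $\vert\psi'(0)\vert<1$, so $\psi$ is not an automorphism. For $w'\in D(0,\rho)$ set $p_{w'}:=\alpha(w')=\alpha_p(\psi(w'))\in L$. Translating the sequence by the automorphism $\phi_{w'}$ with $\phi_{w'}(0)=w'$, the maps $\beta_n\circ\phi_{w'}\in\mathcal U$ have basepoints $\beta_n(w')\to p_{w'}$ and converge to $\alpha\circ\phi_{w'}$, so $\liminf_{q\to p_{w'}}\eta(q)\le\vert(\alpha\circ\phi_{w'})'(0)\vert_g=\vert\alpha'(w')\vert_g(1-\vert w'\vert^2)$. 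The leafwise identity rewrites the ratio of this quantity to $\eta(p_{w'})$ as the Schwarz--Pick quotient $\vert\psi'(w')\vert(1-\vert w'\vert^2)/(1-\vert\psi(w')\vert^2)$, which equals $\vert\psi'(0)\vert<1$ at $w'=0$ and hence stays strictly below $1$ for $\vert w'\vert<\rho'$ with some $0<\rho'\le\rho$. Thus $p_{w'}\in D_{\mathcal F}$ for all such $w'$, and since $\psi$ and $\alpha_p$ are open the set $\alpha_p(\psi(D(0,\rho')))$ is an open neighborhood of $p$ in $L$ contained in $D_{\mathcal F}$; as $p$ was arbitrary, $D_{\mathcal F}\cap L$ is open in $L$.

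For part (2), suppose for contradiction that some ball $B=B(p,\delta)$ satisfies $B\cap D_{\mathcal F}=\emptyset$, i.e. $\eta$ is continuous at every point of $B\sm E$; I will show $\eta$ then extends continuously to $p$ with value $0$, contradicting the hypothesis. Given any $q_n\to p$ in $M\sm E$ with uniformizations $\beta_n$, extract by NCP a limit $\alpha$ with $\alpha(0)=p\in E$. If $\alpha$ is not identically in $E$, pick $z_0$ near $0$ with $x_0:=\alpha(z_0)\in B\sm E$; by Lemma~\ref{L:LimitUnif}, $\alpha(\mbb{D})\subset L_{x_0}\cup E$. Continuity of $\eta$ at the regular point $x_0$ forces the limit $\alpha\circ\phi_{z_0}$ of the uniformizations $\beta_n\circ\phi_{z_0}$ based at $\beta_n(z_0)\to x_0$ to realize the extremal derivative $\eta(x_0)$, hence to be a uniformization of $L_{x_0}$; but then $\alpha(\mbb{D})=L_{x_0}\subset M\sm E$, contradicting $\alpha(0)=p\in E$. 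Therefore $\alpha(\mbb{D})\subset E$, and since $\mathcal F$ is transversal type at $p$, Lemma~\ref{L:LimitUnif} gives $\alpha\equiv p$, so $\eta(q_n)=\vert\beta_n'(0)\vert_g\to\vert\alpha'(0)\vert_g=0$. As this holds along every subsequence, $\eta(q)\to0$ as $q\to p$, which is the desired continuous extension. This contradiction shows every neighborhood of $p$ meets $D_{\mathcal F}$, producing $p_n\in D_{\mathcal F}$ with $p_n\to p$; in particular $D_{\mathcal F}\neq\emptyset$.

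The main obstacle is the propagation in part (1): a priori the limit map witnessing discontinuity could be the constant map $p$, leaving no leaf along which to transport the defect, and the argument hinges on excluding this through the local lower bound $\eta\ge c>0$ at regular points, which guarantees $\vert\alpha'(0)\vert_g>0$ and thus a nonconstant $\alpha$ to lift. For part (2) the delicate point is that Theorem~\ref{T:ContExt} is global, so one must re-run its proof using only continuity of $\eta$ on $B\sm E$; here the decisive observation is that pointwise continuity at a single nearby regular point $x_0$ already forces the limit map to be a genuine uniformization, which is incompatible with $\alpha(0)\in E$.
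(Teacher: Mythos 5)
Your proposal rests on the claim that $\eta$ is \emph{upper} semicontinuous on $M \sm E$, and this is false --- it has the semicontinuity backwards. The standard fact (used twice in this paper, in the proofs of both Theorem~\ref{T:DiscSet}(1) and Proposition~\ref{P:ContSepa}) is that $\eta$ is \emph{lower} semicontinuous: transporting an almost-extremal disc to nearby leaves through a flow box gives $\liminf_{q \to p} \eta(q) \ge \eta(p)$. Failure of upper semicontinuity is exactly what $D_{\cal F}$ records, so your ``first fact'' would make the whole theorem vacuous. Your derivation of it breaks at the step ``the extremality of the uniformization forces $\vert \alpha'(0)\vert_g \le \eta(p)$'': Lemma~\ref{L:LimitUnif} only gives $\alpha(\mbb D) \subset L_p \cup E$, and when $\alpha^{-1}(E) \neq \emptyset$ the map $\alpha$ does not belong to $\cal O(\mbb D, \cal F)$, so it does not compete in the supremum defining $\eta(p)$; in the separatrix picture ($L_p \cong \mbb D^*$ with $L_p \cup \{e\} \cong \mbb D$, $e \in E$, as in Examples~\ref{E:E1.15} and~\ref{E:E1.16}) one gets $\vert \alpha'(0)\vert_g > \eta(p)$. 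Consequently in part (1) the witnessing limit satisfies $m = \vert\alpha'(0)\vert_g > \eta(p)$, your lift has $\vert \psi'(0)\vert > 1$ rather than $< 1$, and your proof as written treats only a case that cannot occur. The transport mechanism itself (precompose with automorphisms $\phi_{w'}$, compare the limit of $\eta(\beta_n(w'))$ with $\eta(p_{w'})$ via the quotient $\vert\psi'(w')\vert(1-\vert w'\vert^2)/(1-\vert\psi(w')\vert^2)$, which is $\neq 1$ at $0$ and hence $\neq 1$ nearby) is sound and is essentially the paper's argument with the functions $f_1, f_2$; the repair is to replace ``$<1$'' by ``$\neq 1$'' throughout and to get nonconstancy of $\alpha$ from lower semicontinuity ($m \ge \eta(p) > 0$) rather than from your local lower bound.

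Part (2) takes a genuinely different route from the paper --- the paper exhibits discontinuity points directly on the local separatrix $\alpha(D(0,r) \sm \{0\}) \subset L$, using that $\eta \to 0$ along it while the transported points $\alpha_n(z_0)$ keep $\eta$ bounded below --- but your version has a gap at its decisive step. You assert that continuity of $\eta$ at the single regular point $x_0 = \alpha(z_0)$ forces the limit $\alpha \circ \phi_{z_0}$, which attains the extremal derivative $\eta(x_0)$ at $0$, to be a uniformization of $L_{x_0}$. That implication is only valid for maps with image in $L_{x_0}$; here the image lies in $L_{x_0} \cup E$ and actually contains $p \in E$, and a map through a filled-in puncture can attain (indeed exceed) the extremal value $\eta(x_0)$ without being a uniformization --- the Schwarz--Pick inequality for the lift $\psi$ defined off the discrete set $\alpha^{-1}(E)$ is computed against $\lambda_{\mbb D \sm \alpha^{-1}(E)} > \lambda_{\mbb D}$, so no rigidity follows from equality at one point. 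To close this you would need the stronger localized statement: continuity of $\eta$ on an open neighbourhood of $x_0$ in $B \sm E$ yields the identity $\eta(\gamma(z)) = (1-\vert z\vert^2)\vert\gamma'(z)\vert_g$ on an open set, forcing the lift of $\gamma$ to be a local Poincar\'e isometry, hence the restriction of a rotation, and then the identity theorem shows $\gamma$ coincides with a genuine uniformization everywhere, contradicting $\gamma(\phi_{z_0}^{-1}(0)) = p \in E$. That argument is a legitimate localization of Theorem~\ref{T:EquCont}, but it is not what you wrote, and without it the contradiction in part (2) is not established.
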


\begin{proof} (1) Let $p \in D_{\cal{F}}$. Let $\{p_n\}_{n \ge 1} \subset M \sm E$ be such that $p_n \to p$ but $\eta(p_n) \not\to \eta(p)$. Up to a subsequence, suppose that $\eta(p_n) \to m \neq \eta(p)$.

\medskip
\no For each $p_n$, let $\alpha_n : \mbb{D} \to L_{p_n}$ be a corresponding uniformizer, i.e., $\alpha_n(0) = p_n$, and $\eta(p_n) = \vert \alpha_n'(0)\vert^2$. Since $\cal{U}$ is NCP, and $\alpha_n(0) = p_n \to p$, up to a further subsequence we can suppose that $\alpha_n \to \alpha : \mbb{D} \to M$, with $\alpha(0) = p$, and $\vert \alpha'(0)\vert^2 = m \neq 0$ (using lower semi-continuity of $\eta$, $\eta(p_n) \to m \ge \eta(p) \neq 0$).

\medskip
\no Using the local structure of leaves, there exist $r >0$ such that $\alpha : \Delta_{r} \to L_{p}$, where $\Delta_r = \{z \in \mbb{D} : \vert z\vert < r\}$.

\medskip
Let $\beta : \mbb{D} \to L_{p}$ be a uniformization map such that $\beta(0) = p$, and $\eta(p) = \vert \beta'(0)\vert^2$. Consider $f_1, f_2 : \Delta_r \rightarrow \mbb{R}$ defined by
\[
f_1(z) = (1- \vert z\vert^2)^2 \lvert \alpha'(z)\rvert^2 \; \text{and} \; \; f_2(z) = (1- \vert z\vert^2)^2 \lvert \beta'(z)\rvert^2
\]
and note that $f_2(z) = \eta(\beta(z))$ for $z \in \Delta_r$. Since $f_1, f_2$ are continuous and $f_1(0) \neq f_2(0)$ (as $f_1(0) = \vert \alpha'(0)\vert^2 = m \neq \eta(p) = \vert \beta'(0)\vert^2 = f_2(0)$), there exists $0 < R < r$, such that 
\begin{equation}
f_1(\Delta_R) \cap f_2(\Delta_R) = \emptyset.
\end{equation}

\medskip
\no {\it Claim:} There exist $0 < r_1, r_2 < R$ such that 
\begin{equation}
\beta(\Delta_{r_2}) \subset \alpha(\Delta_{r_1}).
\end{equation}

\no Let $0 < \hat{r} < R$ be such that both $\alpha\vert_{\Delta_{\hat{r}}}, \beta\vert_{\Delta_{\hat{r}}}$ are biholomorphisms into their image. This is possible since $\vert \alpha'(0)\vert, \vert \beta'(0)\vert \neq 0$. Note that $U = \alpha(\Delta_{\hat{r}}) \cap \beta(\Delta_{\hat{r}})$ is open in $L_p$ and contains $p$. Now choose $r_1 < \hat{r}$ such that $\alpha(\Delta_{r_1}) \subset U$, and corresponding to $r_1$, it is possible to choose $r_2 < \hat{r}$ such that $\beta(\Delta_{r_2}) \subset \alpha(\Delta_{r_1}) \subset U$. Thus, the claim is verified.

\medskip

Consider $V = \beta(\Delta_{r_2})$, which is an open set in $L$ containing $p$. Let $q \in V$ and suppose that $q = \beta(z)$ for some $z \in \Delta_{r_2}$. Using $(1.2)$, there exists a $\tilde{z} \in \Delta_{r_1}$ such that $q = \beta(z) = \alpha(\tilde{z})$. Now consider the sequence $q_n := \alpha_n(\tilde{z}) \in M \sm E$. Since $\alpha_n \to \alpha$ uniformly on compact subsets of $\mbb{D}$, $q_n \to q$ since $q_n = \alpha_n(\tilde{z}) \to \alpha(\tilde{z}) = \beta(z) = q$. Also, $\vert \alpha_n'(\tilde{z})\vert \to \vert \alpha'(\tilde{z})\vert$, which in-turn gives
\begin{equation}
(1- \vert \tilde{z}\vert^2)^2 \lvert \alpha_n'(\tilde{z})\rvert^2 \to (1- \vert \tilde{z}\vert^2)^2 \lvert \alpha'(\tilde{z})\rvert^2.
\end{equation}

\no Since $\eta(q_n) = (1- \vert \tilde{z}\vert^2)^2 \lvert \alpha_n'(\tilde{z})\rvert^2$, $(1.3)$ above shows that 
\[
\eta(q_n) \to f_1(\tilde{z})
\]
where $\tilde{z} \in \Delta_{r_1} \subset \Delta_{R}$. Now using $(1.1)$, and the fact that $f_2(z) = \eta(q)$, we get
\[
f_1(\tilde{z}) \neq f_2(z) = \eta(q).
\]
Therefore, $\eta(q_n) \not\to \eta(q)$ although $q_n \to q$. Thus, $q \in D_{\cal{F}} \cap L$, and since $q \in V$ was arbitrary, $V \subset D_{\cal{F}} \cap L$. 

\medskip

\no (2) Since $\eta$ does not extend continuously to $p \in E$, there exist a sequence $q_n \in M \sm E$ such that $q_n \to p \in E$, and $\lim_{n \to +\infty} \eta(q_n) = k \neq 0$. Let $\alpha_n : \mbb{D} \to L_{q_n}$ be the
uniformizer of the leaf $L_{q_n}$ such that $\alpha_n(0) = q_n$. Up to a subsequence, we can suppose that $\alpha_n \to \alpha$ where $\alpha : \mbb{D} \to L \cup E$, for some leaf $L$ of $\cal{F}$, such that $\alpha(0) = p$ and $\vert\alpha'(0)\vert^2 = k$. The transversality condition at $p \in E$ and the fact that $\vert\alpha'(0)\vert^2 = k \neq 0$ together imply the existence of an $r>0$ such that $\alpha(D(0,r) \setminus \{0\}) \subset L \subset M \sm E$. 

\medskip

Now along the leaf $L$, the map $\eta(q)$ is smooth and tends to $0$ as $q \to p$. Therefore, for $\ep > 0$, we can choose $r > 0$ small enough so that 
\[
\vert \eta(q)\vert^2 < \ep.
\]
for all $q \in \alpha(D(0,r) \setminus \{0\})$. For $q = \alpha(z_0) \in \alpha(D(0,r)^{*})$, take $\tilde{q}_n = \alpha_n(z_0)$. Clearly, $\tilde{q}_n \to q$. But 
\[\eta(\tilde{q}_n) = \eta(\alpha_n(z_0)) = \vert \alpha_n'(z_0)\vert^2 (1 - \vert z_0\vert^2)^2 \to \vert \alpha'(z_0)\vert^2 (1 - \vert z_0\vert^2)^2.
\]
We can take $r >0$ (with $\epsilon < k$) small enough so that $\vert \alpha'(z_0)\vert^2 (1 - \vert z_0\vert^2)^2 > 2\epsilon$. Therefore,
\[
\eta(\tilde{q}_n) \to \vert \alpha'(z_0)\vert^2 (1 - \vert z_0\vert^2)^2 > 2\epsilon > \eta(q)
\]
and this shows that $q \in D_{\cal{F}}$, and since $q \in \alpha(D(0,r)^{*})$ was arbitrary, it follows that $\alpha(D(0,r) \setminus \{0\}) \subset D_{\cal{F}} \subset M \sm E$. Now we can take $p_n \in \alpha(D(0,r) \setminus \{0\})$ such that $p_n \to p$ and this concludes the proof.

\end{proof}

\medskip
\no {\it Remark:} Theorem~\ref{T:DiscSet} also shows that there are no isolated points in $D_{\cal{F}}$.

\medskip

\begin{prop}\label{P:ContSepa}
Let $(M, \cal{F}, E)$ be as before and suppose $\mathcal{U}$ is NCP. If a leaf $L \subset M$ of $\mathcal{F}$ is not a local separatrix at any singular point $q \in E$, then $\eta$ is continuous at each $p \in L$.
\end{prop}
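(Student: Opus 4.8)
The plan is to prove continuity of $\eta$ at a point $p \in L$ by combining the (already available) lower semicontinuity of $\eta$ with an upper semicontinuity argument that feeds off the non-separatrix hypothesis. Since $\eta$ is lower semicontinuous on $M \setminus E$ (this is the property invoked in the proof of Theorem~\ref{T:DiscSet}), it suffices to rule out a failure of upper semicontinuity at $p$. So I would argue by contradiction: suppose $\eta$ is discontinuous at $p \in L$. Then there is a sequence $p_n \to p$ in $M \setminus E$ and, after passing to a subsequence, $\eta(p_n) \to m$ with $m > \eta(p) > 0$; indeed lower semicontinuity forces every subsequential limit to be $\ge \eta(p)$, so discontinuity means some such limit strictly exceeds $\eta(p)$. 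Let $\alpha_n : \mbb{D} \to L_{p_n}$ be uniformizers with $\alpha_n(0) = p_n$ and $\eta(p_n) = \vert \alpha_n'(0)\vert^2$.

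Next, since $\mathcal{U}$ is NCP and $\{\alpha_n(0)\} = \{p_n\}$ is relatively compact, I would extract a subsequence with $\alpha_n \to \alpha$ uniformly on compact subsets of $\mbb{D}$, where $\alpha(0) = p$ and $\vert \alpha'(0)\vert^2 = m$; in particular $\alpha$ is non-constant. By Lemma~\ref{L:LimitUnif}, $\alpha(\mbb{D}) \subset L' \cup E$ where $L'$ is a leaf or empty, and since $\alpha(0) = p \notin E$ we must have $L' = L_p = L$. Setting $F = \alpha^{-1}(E)$, the proof of Lemma~\ref{L:LimitUnif} shows $F$ is either discrete or all of $\mbb{D}$; as $0 \notin F$, the set $F$ is discrete, and $\alpha(\mbb{D} \setminus F) \subset L$.

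The argument then splits into two cases. If $F = \emptyset$, then $\alpha : \mbb{D} \to L$ is an admissible competitor for the extremal problem defining $\eta(p)$ (it takes values in a single leaf and $\alpha(0) = p$), so the extremal characterization of the uniformizer via the Schwarz lemma gives $m = \vert \alpha'(0)\vert^2 \le \eta(p)$, contradicting $m > \eta(p)$. If instead $F \neq \emptyset$, I would pick $z_0 \in F$ and a small disc $D(z_0, \rho)$ meeting $F$ only at $z_0$; then $\alpha(z_0) = q \in E$ while $\alpha(D(z_0,\rho) \setminus \{z_0\}) \subset L$. Since $\alpha$ is non-constant and holomorphic at $z_0$, the image germ of $\alpha$ at $z_0$ is an irreducible analytic curve germ $(S, q)$ through $q$ whose smooth points off $q$ lie in $L$ and are therefore tangent to $\mathcal{F}$; thus $(S,q)$ is an invariant analytic curve through $q$ with $S \setminus \{q\} \subset L$. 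This exhibits $L$ as a local separatrix at $q \in E$, contradicting the hypothesis. Either way one reaches a contradiction, so $\eta$ is continuous at $p$.

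I expect the second case to be the main obstacle, namely verifying rigorously that a nonempty $F$ forces $L$ to be a local separatrix. The crux is that the holomorphic limit $\alpha$, being holomorphic \emph{across} $z_0$, automatically produces an analytic accumulation of $L$ at $q$, ruling out the non-analytic ``spiralling'' accumulation a leaf might otherwise exhibit; one then only needs to package $\alpha\vert_{D(z_0,\rho)}$ as a Puiseux-type parametrization of an invariant curve germ and match it against the precise definition of a local separatrix. By contrast, the first case is routine, resting only on the extremal property of $\eta(p)$, and the extraction and confinement steps are immediate from the NCP hypothesis and Lemma~\ref{L:LimitUnif}.
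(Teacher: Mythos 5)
Your proposal is correct and follows essentially the same route as the paper: extract a limit uniformization $\alpha$ via NCP, use Lemma~\ref{L:LimitUnif} to see $\alpha(\mbb{D})\subset L\cup E$ with $\alpha^{-1}(E)$ discrete, rule out $\alpha^{-1}(E)\neq\emptyset$ via the non-separatrix hypothesis, and then contradict lower semicontinuity with the extremal (Schwarz) bound $m\le\eta(p)$. The only difference is cosmetic: the paper asserts in one line that a point of $\alpha^{-1}(E)$ makes $L$ a local separatrix at $q=\alpha(z_0)$, whereas you spell out why the holomorphic image germ of $\alpha$ at $z_0$ gives an invariant analytic curve germ through $q$.
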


\begin{proof}
Let $p \in L \cap D_{\cal{F}}$. Then there exists a sequence $\{p_n\}_{n \ge 1} \subset M \sm E$ such that $p_n \to p$ but $\eta(p_n) \not\to \eta(p)$. Up to a subsequence, suppose $\eta(p_n) \to m \neq \eta(p)$. Let $\alpha_n$ be a uniformizer of the leaf $L_{p_n}$ with $\alpha_{n}(0) = p_n$. Since $\cal{U}$ is NCP and $\alpha_n(0) \to p$, by passing to a further subsequence, there exists $\alpha : \mbb{D} \rightarrow M$ such that $\alpha_n \to \alpha$ uniformly on compact subsets of $\mbb{D}$. By Lemma~\ref{L:LimitUnif}, there is a leaf $\tilde L$ such that $\alpha(\mbb{D}) \subset \tilde{L} \cup E$, with $\alpha^{-1}(E)$ being either discrete (possibly empty) or all of $\mbb{D}$. Now since $\alpha(0) = p \in M \sm E$, therefore $\tilde{L} = L_p = L$, and $\alpha^{-1}(E)$ is discrete. If $\alpha^{-1}(E) \neq \emptyset$, i.e., there exists a $z_0 \in \alpha^{-1}(E)$, then $L$ will be a local separatrix near $q = \alpha(z_0) \in E$, which is a contradiction. Thus $\alpha^{-1}(E) = \emptyset$, and $\alpha(\mbb{D}) \subset L$, which in turn gives
\[
\eta(p_n) = \vert \alpha_n'(0)\vert^2 \to \vert \alpha'(0)\vert^2 \le \eta(p),
\]
that is $m \le \eta(p)$. But by the lower semi-continuity of $\eta$, $m \ge \eta(p)$. Therefore, $m = \eta(p)$, which is a contradiction. Hence $L \cap D_{\cal{F}} = \emptyset$.

\end{proof}

\begin{thm}\label{T:SingHyper}
Let $(M, \cal{F}, E)$ be a hyperbolic {\sf shfc} such that $\mathcal{U}$ is NCP.
Suppose that for each $q \in E$, there exists a neighbourhood $U_q$, and a local $\cal{F}-$invariant hypersurface $\Sigma_q = \{ f_q = 0\}$ such that 
\[
q \in \Sigma_{q} \subset U_{q}.
\]
Then $\eta$ is continuous on $M \sm \bigcup_{p \in (\cup_{q \in E} \Sigma_q)} L_p$.
\end{thm}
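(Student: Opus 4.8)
The plan is to deduce the statement from Proposition~\ref{P:ContSepa} by showing that every leaf meeting the complement of the removed set fails to be a local separatrix at any point of $E$. Write $\Sigma=\bigcup_{q\in E}\Sigma_q$ and let $R=\bigcup_{p\in\Sigma}L_p$ be the removed set. First I would record the elementary consequence of invariance: since each $\Sigma_q=\{f_q=0\}$ is $\mathcal{F}$-invariant, the set $\Sigma_q\setminus E$ is a union of leaves, so any leaf is either contained in $\Sigma_q$ or disjoint from it. Hence for $x\in M\setminus(R\cup E)$ the leaf $L_x$ is disjoint from every $\Sigma_q$, and in particular $x\notin\Sigma$.

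Next I would fix such an $x$ and run the argument of Proposition~\ref{P:ContSepa}: choose $p_n\to x$ with $\eta(p_n)\to m\neq\eta(x)$, uniformizers $\alpha_n$ with $\alpha_n(0)=p_n$, and (using that $\mathcal{U}$ is NCP) a limit $\alpha$. By Lemma~\ref{L:LimitUnif} one has $\alpha(\mathbb{D})\subset L_x\cup E$ with $\alpha^{-1}(E)$ discrete and $\alpha(0)=x\notin E$. Exactly as in Proposition~\ref{P:ContSepa}, if $\alpha^{-1}(E)=\emptyset$ then $\alpha(\mathbb{D})\subset L_x$, the extremality of the uniformizer gives $m=\lvert\alpha'(0)\rvert^2\le\eta(x)$, and combined with lower semicontinuity this forces $m=\eta(x)$, a contradiction. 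Thus the whole theorem reduces to excluding the case $\alpha(z_0)=q\in E$ for some $z_0$, that is, to showing $L_x$ is not a local separatrix at any $q\in E$.

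The key step, and the point I expect to be the main obstacle, is to prove that such a separatrix must be contained in $\Sigma_q$. For this I would exploit the invariance of $\Sigma_q$, which near $q$ yields a holomorphic relation $X(f_q)=a\,f_q$ for a local generator $X$ of $\mathcal{F}$, and study $g=f_q\circ\alpha$ on a small disc $D(z_0,r)$. Since $\alpha'(z)$ is proportional to $X(\alpha(z))$ along the leaf, $g$ satisfies a linear equation $g'=b\,g$ off $z_0$, with $b$ governed by the invariance relation; as $g(z_0)=f_q(q)=0$, the vanishing propagates to give $g\equiv0$, whence $\alpha(D(z_0,r))\subset\Sigma_q$ and so $L_x\subset\Sigma_q$. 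The delicate issue is the behaviour at $z_0$, where $X(\alpha(z_0))=X(q)=0$ makes the proportionality factor a priori singular; the natural mechanism to control it is the blow-up of the leafwise Poincar\'e metric as the leaf approaches $E$, which should force $\alpha'(z_0)=0$, keep $b$ regular, and thereby make the propagation of the vanishing of $g$ legitimate. Handling this degeneration cleanly is where the real work lies.

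Finally, once $L_x\subset\Sigma_q$ is established we get $x\in L_x\subset\Sigma_q\subset\Sigma$, so $L_x$ is one of the leaves comprising $R$ and therefore $x\in R$, contradicting $x\in M\setminus R$. Hence $\alpha^{-1}(E)=\emptyset$, the leaf $L_x$ is not a local separatrix at any $q\in E$, and Proposition~\ref{P:ContSepa} gives the continuity of $\eta$ at every point of $M\setminus R$, which is the assertion of the theorem.
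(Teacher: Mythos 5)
Your reduction to Proposition~\ref{P:ContSepa} rests on the claim that every leaf through a point of $M\setminus R$ fails to be a local separatrix at every point of $E$, and your ``key step'' is the assertion that a local separatrix of $\mathcal F$ at $q$ must be contained in $\Sigma_q$. Both claims are false under the hypotheses of the theorem. Take $X = x\frac{\partial}{\partial x} - y\frac{\partial}{\partial y}$ on $M=\mathbb D^2$, so $E=\{0\}$, and take $\Sigma_0=\{x=0\}$ with $U_0=\mathbb D^2$; this satisfies every hypothesis ($\mathcal F$ is hyperbolic, $\mathbb D^2$ is taut so $\mathcal U$ is NCP, and $\Sigma_0$ is $\mathcal F$-invariant). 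Here $R=\{x=0\}\setminus\{0\}$, the leaf $\{y=0\}\setminus\{0\}$ lies in $M\setminus R$, and it \emph{is} a local separatrix at $0$ not contained in $\Sigma_0$. So the theorem genuinely asserts continuity of $\eta$ at separatrix points, and Proposition~\ref{P:ContSepa} cannot be the route. The same example shows why your ODE mechanism cannot be repaired: for the parametrization $\alpha(\xi)=(\xi,0)$ of that separatrix one has $g(\xi)=f_0(\alpha(\xi))=\xi$, the invariance relation $X(f_0)=f_0$ produces $b=g'/g=1/\xi$ with a genuine pole at $\xi=0$, and $g\not\equiv 0$. The ``delicate issue at $z_0$'' you flag is therefore not a technicality to be controlled by the blow-up of the Poincar\'e metric; it is the point at which the statement you are trying to prove is false.

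The paper's proof never excludes separatrices and never shows the limit leaf sits inside $\Sigma_q$. Instead, suppose the limit $\alpha$ of the uniformizers $\alpha_n$ hits $E$ at $z_0$, say $\alpha(z_0)=q$. Since the base point lies outside $R$, its leaf is disjoint from $\Sigma_q$, so on a small disc $D(z_0,r)$ the function $f_q\circ\alpha$ vanishes at $z_0$ but is not identically zero. On the other hand, for large $n$ the functions $f_q\circ\alpha_n$ are zero-free on $D(z_0,r)$: by $\mathcal F$-invariance of $\Sigma_q$, a zero would force the whole local plaque of $L_{p_n}$ into $\Sigma_q$, hence $f_q\circ\alpha_n\equiv 0$ there, and in the limit $f_q\circ\alpha\equiv 0$, which is excluded. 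Hurwitz's theorem applied to the zero-free sequence $f_q\circ\alpha_n$ then forces $f_q\circ\alpha$ to be either zero-free or identically zero on $D(z_0,r)$ --- a contradiction either way. This rules out $\alpha^{-1}(E)\neq\emptyset$ directly, after which your second paragraph (extremality plus lower semicontinuity) finishes the argument exactly as you wrote it. You should replace your third paragraph by this Hurwitz argument and drop the reduction to Proposition~\ref{P:ContSepa}.
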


\begin{proof}
Let $\tilde{p} \in (M \sm \bigcup_{p 
\in (\cup_{q \in E} \Sigma_q)} L_p) \cap D_{\cal{F}}$. Let $\{p_n\}_{n \ge 1} \subset M \sm E$ be a sequence such that $p_n \to \tilde{p}$, and $\eta(p_n) \not\to \eta(\tilde{p})$. Up to a subsequence, suppose $\eta(p_n) \to m \neq \eta(\tilde{p})$. Let $\alpha_n$ be a uniformizer of the leaf $L_{p_n}$ with $\alpha_{n}(0) = p_n$. Since $\cal{U}$ is NCP and $\alpha_n(0) \to \tilde{p}$, up to a further subsequence, there exists $\alpha : \mbb{D} \rightarrow M$ such that $\alpha_n \to \alpha$ uniformly on compact subsets of $\mbb{D}$. By Lemma~\ref{L:LimitUnif}, there exists a leaf $\tilde L$ such that $\alpha(\mbb{D}) \subset \tilde{L} \cup E$, with $\alpha^{-1}(E)$ being either discrete (possibly empty) or whole $\mbb{D}$. Since $\tilde{p} = \alpha(0) \in M \sm \bigcup_{p 
\in (\cup_{q \in E} \Sigma_q)} L_p \subset M \sm E$, therefore $\tilde{L} = L_{\tilde{p}}$ and $\alpha^{-1}(E)$ is discrete.

\medskip

Now if $\alpha^{-1}(E) \neq \emptyset$, fix $z \in \alpha^{-1}(E)$ (where $\alpha(z) = q \in E$) and let $r>0$ be such that $D(z,r) \sm \{z\} \subset \mbb{D} \sm \alpha^{-1}(E)$ and $\alpha(D(z,r)) \subset U_{q}$. By the choice of $\tilde{p}$ and the fact that $\alpha(D(z,r)) \cap U_{q} \cap E = \{q\}$, we can choose $r> 0$ small enough so that $\alpha(D(z,r)) \cap \Sigma_{q} = \{q\}$. As each $\alpha_n(D(z,r))$ is contained in the leaf $L_{p_n}$, by $\cal{F}-$invariance of $\Sigma_{q}$ and the fact that $\alpha_n\vert_{D(z,r)}$ converges uniformly to $\alpha\vert_{D(z,r)}$, we get that $f_{q} \neq 0$ on $\alpha_{n}(D(z,r))$. But $f_{q} \circ \alpha_{n}(z) \to 0$. Therefore, by Hurwitz's theorem, $f_{q} \circ \alpha\vert_{D(z,r)} \equiv 0$, which is a contradiction. Thus, $\alpha^{-1}(E) = \emptyset$, and $\alpha(\mbb{D}) \subset L_{\tilde{p}}$. This gives $m = \eta(\tilde{p})$, which is again a contradiction to the assumption that $m \neq \eta(\tilde{p})$. Thus, $\eta$ is continuous on $M \sm \bigcup_{p 
\in (\cup_{q \in E} \Sigma_q)} L_p$.

\end{proof}

\no Using Theorem~\ref{T:SingHyper} repeatedly, the following two corollaries are obtained. The details are omitted.

\medskip

\begin{cor}\label{C:MultiHyper}
Let $(M, \cal{F}, E)$ be a hyperbolic {\sf shfc} such that $\mathcal{U}$ is NCP.
Suppose that for each $q \in E$, there exists a neighbourhood $U_q$, and $k_{q}$ local $\cal{F}-$invariant hypersurfaces $\Sigma_{q,1}, \Sigma_{q,2}, \ldots, \Sigma_{q,k_q}$, with $k_q \ge 1$, such that 
\[
q \in \cap_{i=1}^{k_q} \Sigma_{q,i} \subset U_{q}.
\]
Then $\eta$ is continuous on $M \sm \bigcup_{p 
\in (\cup_{q \in E} (\cap_{i = 1}^{k_q}\Sigma_{q,i}))} L_p$.

\end{cor}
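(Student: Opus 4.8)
The plan is to reproduce the argument of Theorem~\ref{T:SingHyper} almost verbatim, the only genuinely new input being a finite combinatorial selection that lets one replace the single invariant hypersurface at a singular point by one suitably chosen member of the family $\Sigma_{q,1},\dots,\Sigma_{q,k_q}$. Write $S=\bigcup_{q\in E}\big(\bigcap_{i=1}^{k_q}\Sigma_{q,i}\big)$ and $\mathcal{S}=\bigcup_{p\in S}L_p$ for its saturation, so the claim is that $\eta$ is continuous on $M\sm\mathcal{S}$. Toward a contradiction fix $\tilde p\in(M\sm\mathcal{S})\cap D_{\cal F}$ and run the opening of the proof of Theorem~\ref{T:SingHyper}: choose $p_n\to\tilde p$ with $\eta(p_n)\to m\neq\eta(\tilde p)$, uniformizers $\alpha_n$ with $\alpha_n(0)=p_n$, and pass to a subsequence $\alpha_n\to\alpha$ on compact parts of $\mbb D$. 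By Lemma~\ref{L:LimitUnif}, $\alpha(\mbb D)\subset\tilde L\cup E$ with $\alpha^{-1}(E)$ either discrete or all of $\mbb D$; since $\tilde p=\alpha(0)\notin E$ we obtain $\tilde L=L_{\tilde p}$ and $\alpha^{-1}(E)$ discrete. As in Theorem~\ref{T:SingHyper}, it then suffices to rule out $\alpha^{-1}(E)\neq\emptyset$.

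So suppose $z\in\alpha^{-1}(E)$, say $\alpha(z)=q\in E$, and fix $r>0$ small enough that $\alpha(D(z,r))\subset U_q$ and $\alpha(D(z,r))\cap E=\{q\}$, which is possible by discreteness of $\alpha^{-1}(E)$; in particular $\alpha(D(z,r)\sm\{z\})\subset L_{\tilde p}$. The key step is to produce a single index $i_0\in\{1,\dots,k_q\}$ and a radius $0<r_0\le r$ with $\alpha(D(z,r_0))\cap\Sigma_{q,i_0}=\{q\}$. I argue by contradiction over the finite index set: were it the case that for every $i$ and every small radius $\alpha(D(z,r))$ met $\Sigma_{q,i}$ at a point other than $q$, then, shrinking $r$ so that such a point lies in $M\sm E$, that point would be a regular point of $L_{\tilde p}$ lying on the $\cal F$-invariant hypersurface $\Sigma_{q,i}$; since an invariant hypersurface is locally saturated, this forces $L_{\tilde p}\subset\Sigma_{q,i}$. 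As this would hold for each of the finitely many $i$, we would get $L_{\tilde p}\subset\bigcap_{i=1}^{k_q}\Sigma_{q,i}\subset S$, whence $\tilde p\in L_{\tilde p}\subset\mathcal{S}$, contradicting the choice of $\tilde p$. Hence the desired pair $(i_0,r_0)$ exists.

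With $\Sigma_{q,i_0}=\{f_{q,i_0}=0\}$ in hand, the remainder is exactly the Hurwitz argument of Theorem~\ref{T:SingHyper} applied to this one hypersurface: $\cal F$-invariance of $\Sigma_{q,i_0}$ together with $\alpha_n\to\alpha$ uniformly on $D(z,r_0)$ forces $f_{q,i_0}\neq 0$ on each $\alpha_n(D(z,r_0))$, while $f_{q,i_0}\circ\alpha_n(z)\to f_{q,i_0}(q)=0$, so Hurwitz's theorem gives $f_{q,i_0}\circ\alpha\equiv 0$ on $D(z,r_0)$; this contradicts $\alpha(D(z,r_0))\cap\Sigma_{q,i_0}=\{q\}$. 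Therefore $\alpha^{-1}(E)=\emptyset$, so $\alpha(\mbb D)\subset L_{\tilde p}$ and $m=\eta(\tilde p)$, contradicting $m\neq\eta(\tilde p)$; thus $\eta$ is continuous on $M\sm\mathcal{S}$. The one place demanding care — and the main obstacle — is the selection step of the second paragraph: one must see that failure at $q$ for all $k_q$ hypersurfaces simultaneously pushes the entire leaf $L_{\tilde p}$ into the finite intersection $\bigcap_i\Sigma_{q,i}$, and it is precisely here that the hypothesis $q\in\bigcap_i\Sigma_{q,i}\subset U_q$ and the local saturation of each $\Sigma_{q,i}$ enter. Everything else is a transcription of the $k_q=1$ case.
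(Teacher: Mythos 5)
Your argument is correct and in fact supplies the details that the paper omits: the paper merely asserts that the corollary follows by ``using Theorem~\ref{T:SingHyper} repeatedly''. It is worth noting that a literal black-box iteration of Theorem~\ref{T:SingHyper} over all choice functions $q\mapsto i(q)$ only yields continuity off $\bigcup_{q\in E}\bigcap_{i=1}^{k_q}\big(\bigcup_{p\in\Sigma_{q,i}}L_p\big)$, which can be a strictly larger excluded set than the saturation of $\bigcup_{q}\bigcap_{i}\Sigma_{q,i}$ appearing in the statement, since a leaf may meet each $\Sigma_{q,i}$ individually without meeting their common intersection; your direct rerun of the proof of Theorem~\ref{T:SingHyper}, with the finite selection of an index $i_0$ at the singular point $q=\alpha(z)$, is what actually proves the corollary as written, and the selection step is precisely where the hypothesis $q\in\bigcap_i\Sigma_{q,i}$ enters. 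One imprecision should be tightened: each $\Sigma_{q,i}$ is only a \emph{local} hypersurface in $U_q$, so invariance does not give the global containment $L_{\tilde p}\subset\Sigma_{q,i}$. What it does give is that the leaf of $\mathcal F\vert_{U_q}$ (i.e.\ the connected component of $L_{\tilde p}\cap U_q$) containing the connected punctured image $\alpha(D(z,r)\setminus\{z\})$ lies in $\Sigma_{q,i}$; if this happens for every $i$, that component lies in $\bigcap_{i}\Sigma_{q,i}\subset S$, so $L_{\tilde p}$ passes through a point of $S\setminus E$ and hence $\tilde p\in L_{\tilde p}\subset\mathcal S$, which is the contradiction you want. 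With that rewording, and with the standard justification (as in the proof of Theorem~\ref{T:SingHyper}) that $f_{q,i_0}\circ\alpha_n$ is zero-free on $D(z,r_0)$ for large $n$, the proof is complete.
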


\medskip

\begin{cor}\label{C:ContHyper}
Let $(M, \cal{F}, E)$ be a hyperbolic {\sf shfc} such that $\mathcal{U}$ is NCP.
Suppose that for each $q \in E$, there exists a neighbourhood $U_q$, and $m-k$ local $\cal{F}-$invariant hypersurfaces $\Sigma_{q,1}, \Sigma_{q,2}, \ldots, \Sigma_{q,m-k}$ such that 
\[
q \in \cap_{i=1}^{m-k} \Sigma_{q,i} \subset U_{q} \cap E.
\]
Then $\eta$ is continuous on $M \sm E$.

\end{cor}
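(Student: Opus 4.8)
The quickest route I would take is to apply Corollary~\ref{C:MultiHyper} with $k_q = m-k$ for every $q \in E$. That corollary then yields continuity of $\eta$ on $M \setminus \bigcup_{p \in (\cup_{q \in E} \cap_{i=1}^{m-k} \Sigma_{q,i})} L_p$. The point is that the present hypothesis is sharper: it demands $\cap_{i=1}^{m-k} \Sigma_{q,i} \subset U_q \cap E$, so the indexing set $\cup_{q \in E} \cap_i \Sigma_{q,i}$ is contained in $E$. Since every leaf $L_p$ lies in $M \setminus E$, no leaf passes through a point of $E$, and the exceptional union $\bigcup_{p \in (\cup_q \cap_i \Sigma_{q,i})} L_p$ is therefore empty. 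Hence $\eta$ is continuous on all of $M \setminus E$, which is exactly the claim. (Note $m-k \ge 2$ since $E$ has codimension at least two, so the count $k_q = m-k \ge 1$ required by Corollary~\ref{C:MultiHyper} is met.)

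For a self-contained argument in the spirit of the repeated use of Theorem~\ref{T:SingHyper}, I would instead rerun its proof with an arbitrary $\tilde p \in M \setminus E$. Exactly as there, a putative discontinuity of $\eta$ at $\tilde p$ produces uniformizers $\alpha_n$ with $\alpha_n(0) = p_n \to \tilde p$, $\eta(p_n) \to m \neq \eta(\tilde p)$, and a limit $\alpha$ satisfying $\alpha(0) = \tilde p$, $\alpha(\mathbb D) \subset L_{\tilde p} \cup E$, with $\alpha^{-1}(E)$ discrete. The single spot where an invariant hypersurface was used is at a point $z \in \alpha^{-1}(E)$ with $\alpha(z) = q \in E$; the task is to supply, among the $m-k$ hypersurfaces $\Sigma_{q,1}, \ldots, \Sigma_{q,m-k}$ through $q$, one that isolates $q$ along $\alpha$.

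The decisive observation is that $f_{q,i} \circ \alpha$ cannot vanish identically near $z$ for every $i$: were that so, then on a small disc $D(z,r)$ we would get $\alpha(D(z,r)) \subset \cap_i \Sigma_{q,i} \subset E$, contradicting $\alpha(D(z,r) \setminus \{z\}) \subset L_{\tilde p} \subset M \setminus E$. So I would fix an index $i$ with $f_{q,i} \circ \alpha \not\equiv 0$ and shrink $r$ so that $z$ is its only zero, i.e. $\alpha(D(z,r)) \cap \Sigma_{q,i} = \{q\}$. Invoking the $\mathcal F$-invariance of $\Sigma_{q,i}$ shows $f_{q,i} \neq 0$ on each $\alpha_n(D(z,r))$ for large $n$, while $f_{q,i} \circ \alpha_n(z) \to 0$; Hurwitz's theorem then forces $f_{q,i} \circ \alpha \equiv 0$, a contradiction. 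Therefore $\alpha^{-1}(E) = \emptyset$, $\alpha(\mathbb D) \subset L_{\tilde p}$, and the limit identity from the proof of Theorem~\ref{T:SingHyper} gives $m = \eta(\tilde p)$, against $m \neq \eta(\tilde p)$; hence $\tilde p \notin D_{\mathcal F}$.

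The main obstacle, and the only genuinely new point compared with Theorem~\ref{T:SingHyper}, is this choice of hypersurface at each singular value $q$: an individual $\Sigma_{q,i}$ need no longer meet $\alpha(D(z,r))$ only at $q$, because the leaf $L_{\tilde p}$ may cross some of the separate hypersurfaces. The containment $\cap_i \Sigma_{q,i} \subset E$ is precisely what excludes the degenerate possibility that $\alpha$ lands in all of them simultaneously, thereby guaranteeing a usable index $i$ and reducing the situation to the single-hypersurface mechanism already established. Everything else is a verbatim repetition of the estimates in Theorem~\ref{T:SingHyper}.
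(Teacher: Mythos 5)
Your proposal is correct and follows the route the paper intends: the paper obtains this corollary by ``using Theorem~\ref{T:SingHyper} repeatedly'' and omits all details, and both of your arguments supply those details faithfully --- the first by observing that the exceptional union in Corollary~\ref{C:MultiHyper} is empty once $\cap_{i}\Sigma_{q,i} \subset E$ (so that no leaf passes through the indexing set), the second by rerunning the proof of Theorem~\ref{T:SingHyper} verbatim except at the single point where a hypersurface meeting $\alpha(D(z,r))$ only at $q$ must be produced. Your index-selection step there --- some $f_{q,i}\circ\alpha \not\equiv 0$ near $z$, since otherwise $\alpha(D(z,r)) \subset \cap_{i}\Sigma_{q,i} \subset E$ would contradict $\alpha(D(z,r)\setminus\{z\}) \subset L_{\tilde{p}} \subset M \setminus E$ --- is precisely the detail the paper glosses over, and it is the right one, since a naive application of Theorem~\ref{T:SingHyper} once per index $j$ does not by itself pin down a single hypersurface that works along the limit map.
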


\medskip
\no {\it Remark}: Corollary~\ref{C:ContHyper} strengthens a result of Fornaess--Sibony (Theorem $20$ in \cite{FS}), who proved the result for singular Riemann surface laminations on a compact Hermitian manifold $M$ with discrete singular set and an additional hypothesis - \textit{no image of $\mbb{C}$ is locally contained in leaves outside the singular set}. It is not too difficult to see that Theorem~\ref{T:SingHyper} still holds if we assume $M$ to be compact Hermitian manifold and replace the hypothesis of $\cal{U}$ being NCP by the one used in \cite{FS} namely, no image of $\mbb{C}$ is locally contained in leaves outside the singular set.

\medskip

\begin{exam}\label{E:E1.14}
Consider the hyperbolic {\sf shfc} $\cal{F}_{X}$ on the unit polydisc $M = \mbb{D}^3 \subset \mbb{C}^3$ induced by the vector field
\[
X = x \frac{\partial}{\partial x} + e^{z}y \frac{\partial}{\partial y}.
\]
The singular set of $\cal{F}_{X}$ is given by $E = \{(0,0,z) \in M\}$. Let $p = (0,0,z) \in E$ be an arbitrary point in $E$. By Example \ref{E:E1.4}, $\cal{F}_{X}$ is of transversal type.

\medskip

\no Now observe that both $\Sigma_1 = \{x=0\}, \Sigma_2 = \{y = 0\}$ are $\cal{F}_{X}-$invariant. Since $\cal{U}$ is NCP and $E = \Sigma_1 \cap \Sigma_2$, Corollary~\ref{C:ContHyper} shows that the modulus of uniformization map $\eta$ is continuous on $M \sm E$. Now Theorem~\ref{T:ContExt} tells us that the map $\eta$ extends continuously to all of $M$.
\end{exam}

\medskip

\begin{exam}\label{E:E1.15}
Consider the vector field on $M = \mbb{D}^3 \subset \mathbb C^3$ given by
\[
X = z \frac{\partial}{\partial x} + xy \frac{\partial}{\partial y} + xy \frac{\partial}{\partial z}.
\]
Denote by $\mathcal{F}_{X}$ the corresponding {\sf shfc} induced by $X$ on $M$ with singular set $E = \{(0,y,0) : y \in \mbb{D}\} \cup \{(x,0,0) : x \in \mbb{D}\}$. It is clear that $\mathcal{F}_{X}$ is a hyperbolic foliation. Also note that $\mathcal{F}_X$ is an example of a holomorphic foliation which is not of transversal type since $(1,0,0) \in \overline{T_{0}\mathcal{F}_{X}} \cap T_{0}E$.

\medskip

One can check that the plane $H = \{y = 0\}$ is $\mathcal{F}_{X}-$invariant, since on the plane $H$, the vector field is given by $X = z \frac{\partial}{\partial x}$. The leaf passing through a point $p = (0,0,z) \in H$, with $z \neq 0$, is given by $L_{p} = \{(\xi,0,z): \xi \in \mbb{D}\}$. Consider the map $\alpha_{p} : \mbb{D} \rightarrow L_{p}$ given by $\alpha_{p}(\xi) = (\xi, 0, z)$. It is easy to see that $\alpha_{p}$ is a uniformization of the leaf $L_{p}$ with $\alpha_{p}(0) = p$, and therefore 
\[
\eta(p) = \vert \alpha'(0)\vert^2 = 1.
\] 
Now consider the sequence $p_n = (0,0,\frac{1}{n}) \in H \setminus E$, and let $\alpha_{n}$ be the corresponding uniformizer of the leaf $L_{p_n}$ with base point $p_n$ given by $\alpha_n(\xi) = (\xi, 0, \frac{1}{n})$. Clearly, $\alpha_n \to \alpha$ uniformly on compact subsets of $\mbb{D}$, where $\alpha : \mbb{D} \rightarrow M$ is given by $\alpha(\xi) = (\xi, 0, 0) \in E$, i.e., $\alpha(\mbb{D}) \subset E$. Now $p_n \to 0 \in E$, and $\eta(p_n) = 1$ for all $n \ge 1$. 

\medskip

Consider the map $g : \mbb{D} \to M$ given by $g(w) = (w, \frac{w^2}{2},\frac{w^2}{2})$. Observe that $g$ is injective and $g(\mbb{D}) \cap E = \{0\}$. Also $g'(w) = (1, w, w) \in T_{g(w)}\mathcal{F}_{X}$, for all $w \in \mbb{D} \setminus \{0\} = \mbb{D}^{\ast}$. Therefore $g(\mbb{D}^{\ast})$ should be contained in a leaf of $\mathcal{F}_{X}$ and since $g(w) \to \partial M$ as $w \to \partial \mbb{D}$, we get that $g(\mbb{D}^{\ast})$ is a leaf $L$ of $\mathcal{F}_{X}$ which is biholomorphic to $\mbb{D}^{\ast}$. Thus we can take any sequence $q_n \in L$ such that $q_n \to 0$ and $\eta(q_n) \to 0$ (since the Poincar\'e metric on $\mbb{D}^{\ast}$ is complete at the origin). 

\medskip

Hence, there are two sequences $\{p_n\}_{n \ge 1}, \{q_n\}_{n \ge 1}$ in $M \setminus E$ such that $p_n,q_n \to 0$, but $\eta(p_n) \to 1$, and $\eta(q_n) \to 0$. Thus, $\eta$ does not extend continuously to $0 \in E$.
\end{exam}

\begin{exam}\label{E:E1.16}
Let $\cal{F}_{X}$ be the {\sf shfc} induced by the vector field 
\[
X = x\frac{\partial}{\partial x} + zy \frac{\partial}{\partial y} 
\]
on the open polydisc of radius $r = (r_1,r_2,r_3)$ denoted by $M= P(0,r)$. The singular set of $\cal{F}_{X}$ is $E = \{(0,y,z) \in M : yz = 0\} = \{y-\text{axis}\}\cup \{z-\text{axis}\}$ (here dim$(E) = 1$). Since $M$ is Kobayashi hyperbolic, $\cal{F}_{X}$ is a hyperbolic foliation, and $\cal{U}$ is NCP. 

\medskip

\no It can be checked that $\Sigma_1 = \{x=0\}, \Sigma_2 = \{y=0\}$ and $\Sigma_3 = \{z=0\}$ are all $\cal{F}_{X}-$invariant. For $p = (0,y,0) \in E$, take $U_{p} = M$, and observe that $p \in \Sigma_{1} \cap \Sigma_{3} = \{y- \text{axis}\} \subset E$. Similarly, for $q = (0,0,z) \in E$, we take $U_{q} = M$ and observe that $q \in \Sigma_{1} \cap \Sigma_{2} = \{z- \text{axis}\} \subset E$. Corollary~\ref{C:ContHyper} now gives us the continuity of $\eta$ on $M \sm E$. 

\medskip
\no Observe that $X(0,y,z) = zy \frac{\partial}{\partial y}$, which implies $(0,1,0) \in T_{(0,y,z)}\cal{F}_{X}$ for all $y,z \neq 0$. Take $p = (0,y,0) \in E$ and since $C_{p}E = \{y-\text{axis}\}$, it follows that $(0,1,0) \in C_{p}E$. Take $p_n = (0,y,\frac{1}{n}) \in M \sm E$ and observe that $p_n \to p$ and $(0,1,0) \in T_{p_n}\cal{F}_{X}$ for all $n$. Therefore, $(0,1,0) \in C_{p}\cal{F}_{X} \cap C_{p}E$. Thus, $\cal{F}_{X}$ is not of transversal type at each $p \in \{y-\text{axis}\} \subset E$. 

\medskip

Let $p = (0,y,0) \in E$ be such a point. Again consider $p_n = (0,y, \frac{1}{n}) \in M \sm E$. On $\Sigma_{1} = \{x=0\}$, the vector field reduced to $X = zy\frac{\partial}{\partial y}$, and we can check that the leaf passing through $p_n$ is given by $L_{p_n} = \{(0,\xi,\frac{1}{n}) : \xi \in D(0,r_2)^{\ast}\} \cong \mbb{D}^{\ast}$. Therefore $\eta(p_n) = \eta(p_m) \neq 0$ for all $n, m \in \mbb{N}$. 

\medskip

Now the foliation $\cal{F}_{X}$ on the hypersurface $\Sigma_{3}$ is induced by the vector field $X = x\frac{\partial}{\partial x}$. It can be seen that the leaf passing through a point $(x,y,0) \in M \sm E$ is given by $L = \{(\xi,y,0) : \xi \in D(0,r_1)^{\ast}\} \cong \mbb{D}^{\ast}$. Note that $L \cup \{p\} \cong \mbb{D}$, that is $L$ is a separatrix. Therefore, if we take $q_n \in L$ be such that $q_n \to p$, then $\eta(q_n) \to 0$.

\medskip

Thus, we get two sequences $\{p_n\}_{n \ge 1}, \{q_n\}_{n \ge 1} \subset M \sm E$ such that $p_n \to p$, $q_n \to p$, but $\eta(p_n) \to k \neq 0$ and $\eta(q_n) \to 0$. Hence, $\eta$ does not have a continuous extension to any $p \in E_1 = \{(0,y,0) : y \in D(0,r_2)\} \subset E$.

\medskip
\no Now let $p = (0,0,z) \in E$, where $z \neq 0$. Looking at Example \ref{E:E1.4}, we see that $\cal{F}_{X}$ is transversal type at $p$, and in turn $\cal{F}_{X}$ is transversal type at each $p \in E_2 = \{(0,0,z) : z \in D(0,r_3)^{\ast}\} \subset E$. By Theorem~\ref{T:ContExt}, the map $\eta$ extends continuously to the set $M \cup E_2$.

\end{exam}

\medskip

\begin{exam}\label{E:E1.17}
Let $\cal{F}_{X}$ be the hyperbolic {\sf shfc} induced by the vector field 
\[
X = x\frac{\partial}{\partial x} + zy \frac{\partial}{\partial y} + zy \frac{\partial}{\partial z}
\]
on the open polydisc $M = P(0, r) \subset \mathbb C^3$ of radius $r = (r_1,r_2,r_3)$. The singular set of $\cal{F}_{X}$ is $E = \{(0,y,z) \in M : yz = 0\} = \{y-\text{axis}\}\cup \{z-\text{axis}\}$ which is one-dimensional. Again, one can check that $\cal{U}$ is NCP and that $\Sigma_1 = \{x=0\}, \Sigma_2 = \{y=0\}$ and $\Sigma_3 = \{z=0\}$ are all $\cal{F}_{X}-$invariant. As before, we can use Corollary~\ref{C:ContHyper} here to conclude that $\eta$ is continuous on $M \sm E$. Example \ref{E:E1.5} shows  that $\cal{F}_{X}$ is transversal type. Therefore, $\eta$ can be extended continuously to all of $M$.

\end{exam}

\medskip

\begin{exam}\label{E:E1.18}
Let $\cal{F}_{X}$ be the hyperbolic {\sf shfc} induced by the vector field 
\[
X = xy\frac{\partial}{\partial x} + zy \frac{\partial}{\partial y} + zx \frac{\partial}{\partial z}
\]
on the open polydisc of radius $r = (r_1,r_2,r_3)$ as in the previous example. The singular set of $\cal{F}_{X}$ is given by $E = \{(x,y,z) \in M : xy = yz = zx = 0\} = \{x-\text{axis}\} \cup \{y-\text{axis}\} \cup \{z-\text{axis}\}$ which is one-dimensional. It can be checked that $\cal{U}$ is NCP, and $\Sigma_1 = \{x=0\}, \Sigma_2 = \{y=0\}$ and $\Sigma_3 = \{z=0\}$ are all $\cal{F}_{X}-$invariant. Let
\[
E_1 = \{x-\text{axis}\} = \Sigma_{2} \cap \Sigma_{3}, \; E_2 = \{y-\text{axis}\} = \Sigma_{1} \cap \Sigma_{3} \; \& \; E_3 = \{z-\text{axis}\} = \Sigma_1 \cap \Sigma_2. 
\]
By Corollary~\ref{C:ContHyper}, $\eta$ is continuous on $M \sm E$. 

\medskip

\no Take $p= (x,0,0) \in E$ for $x \neq 0$. Observe that $C_{p}E = \langle e_1\rangle$, and therefore $(1,0,0) = e_1 \in C_{p}E$. Consider the sequence $p_n = (x,\frac{1}{n},0) \in M \sm E$ and note that $e_1 \in T_{p_n}\cal{F}_{X}$ for all $n$. Therefore, $e_1 \in \overline{C_{p}\cal{F}_{X}}$ and hence $\cal{F}_{X}$ is not transversal type at $p$. Since $p$ is arbitrary, $\cal{F}_{X}$ is not transversal type at any point of $\{(x,0,0) : x \in D(0,r_1)^{\ast}\} \subset E$.

\medskip

\no Now on $\Sigma_3 = \{z = 0\}$, the foliation $\cal{F}_{X}$ is given by $X = xy \frac{\partial}{\partial x}$. One can check that the leaf of $\cal{F}_{X}$ passing through $p_n$ is given by $L_{p_n} = \{(\xi,\frac{1}{n},0): \xi \in D(0,r_1)^{\ast}\} \cong \mbb{D}^{\ast}$ for all $n$. Therefore $\eta(p_n) = \eta(p_m) \neq 0$ for all $n,m \in \mbb{N}$. 

\medskip

\no On $\Sigma_{2} = \{y =0\}$, the foliation $\cal{F}_{X}$ is given by $X= xz\frac{\partial}{\partial z}$. Here the leaf of $\cal{F}_{X}$ passing through $\tilde{p} = (x,0,z)$ is given by $L = \{(x,0,\xi) : \xi \in D(0,r_3)^{\ast}\} \cong \mbb{D}^{\ast}$. Note that $L \cup \{p\} \cong \mbb{D}$, i.e., $L$ is a separatrix through $p$. For any sequence $\{q_n\}_{n \ge 1} \subset L$ such that $q_n \to p$, we get $\eta(q_n) \to 0$.

\medskip

\no Therefore, we get two sequences $p_n, q_n \to p$, but $\eta(p_n) \to k \neq 0$, and $\eta(q_n) \to 0$. Thus, $\eta$ does not extend continuously to $p$.

\medskip
We can use similar arguments to check that for every $p \in \tilde E = E \sm\{0\}$, $\cal{F}_{X}$ is not transversal type at $p$ (in fact $\cal{F}_{X}$ is not transversal type for any $p \in E$). Once again, by looking at the structure of leaves in the invariant hyperplanes $\Sigma_{i}$'s, we can conclude that $\eta$ does not extend continuously to any point $p \in \tilde{E}$.

\end{exam}


\section{$\eta$ as a domain functional}

For a domain $U \subset M$, let $\mathcal F_U$ denote the restriction of the foliation $\mathcal F$ to $U$. For a $p \in M \setminus E$, let $L_{p, U}$ be the connected component of $L_P \cap U$ containing $p$. If it is non-empty, $L_{p, U}$ is the leaf of $\mathcal F_U$ containing $p$. Let $\eta_U : U \to (0, \infty)$ be the modulus of uniformization associated to $\mathcal F_U$. Note that $\eta_U$ is defined using the family $\mathcal O(\mathbb D, \mathcal F_U)$. It is of interest to study the variation $U \mapsto \eta_U$. The results in \cite{NM}, which primarily dealt with $\eta_U$ as $U$ increases monotonically, were strengthened in \cite{GV} in which the domains $U$'s were allowed to vary in the Hausdorff sense.

\medskip

To clarify this, let $d$ be the distance on $M$ induced by $g$, and for $S \subset M$ and $\ep > 0$, let $S_{\ep}$ be the $\ep$-thickening of $S$, with distances being measured using $d$. Recall that the Hausdorff distance $\mathcal H(A, B)$ between compact sets $A, B \subset M$ is the infimum of all $\ep >0$ such that $A \subset B_{\ep}$ and $B \subset A_{\ep}$. For bounded domains $U, V \subset M$, the prescription $\rho(U, V) = \mathcal H(\ov U, \ov V) + \mathcal H(\pa U, \pa V)$ defines a metric (see \cite{Bo}) on the collection of all  bounded open subsets of $X$ with the property that if $\rho(U, U_n) \ra 0$, then every compact subset of $U$ is eventually contained in $U_n$, and every neighbourhood of $\ov U$ contains all the $U_n$'s eventually.

\medskip

Under suitable conditions, Theorem $1.1$ in \cite{GV} shows that if $\{U_n\}_{n \ge 1}$ is a sequence of bounded domains in $M$ that converge to $U$ in the sense that $\rho(U, U_n) \to 0$, then $\eta_{U_n} \to \eta_U$ uniformly on compact subsets of $U \setminus E$. The question that remains is what happens near $E$? 

\begin{thm}\label{T:DomainFunctional}
Let $(M, E, \mathcal F)$ be a hyperbolic {\sf shfc}. Let $\{U_n\}, U$ be bounded domains in $M$. Assume that $\rho(U, U_n) \ra 0$, $U$ is taut and there is a taut domain $V$ containing $\ov U$, and $\eta_U$ is continuous. If $\mathcal F_U$ is of transversal type, then $\eta_{U_n} \to \eta_U$ uniformly on compact subsets of $U$ (define $\eta_{U_n} = \eta_{U} \equiv 0$ on the set $E$). 
\end{thm}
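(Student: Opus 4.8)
The plan is to argue by contradiction with a normal--families/compactness scheme, reducing everything to the behaviour along a single limiting disc and then playing the continuity of $\eta_U$ at $E$ against the size of the limiting derivative. First I would record that, since $U$ is taut, the family $\mathcal U_U$ of uniformizations of $\mathcal F_U$ is NCP; as $\eta_U$ is continuous on $U\sm E$ and $\mathcal F_U$ is of transversal type, Theorem~\ref{T:ContExt} applied to $(U,\mathcal F_U, E\cap U)$ shows that $\eta_U$ extends continuously to $E\cap U$ with value $0$ there. Fix a compact $K\subset U$ and suppose the convergence fails on $K$: there are $\ep>0$ and points $z_n\in K$ with $|\eta_{U_n}(z_n)-\eta_U(z_n)|\ge\ep$, and after passing to a subsequence $z_n\to z_0\in K$. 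If $z_0\notin E$, a small closed ball about $z_0$ lies in $U\sm E$, eventually contains the $z_n$, and Theorem~$1.1$ of \cite{GV} gives uniform convergence there, a contradiction; so the genuine case is $z_0\in E$.

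In the case $z_0\in E$ I would first reduce to $z_n\in U\sm E$ with $\eta_{U_n}(z_n)\to m>0$: points $z_n\in E$ contribute a zero difference, and continuity of the extension gives $\eta_U(z_n)\to 0$, forcing $\eta_{U_n}(z_n)\ge\ep/2$ eventually. Let $\alpha_n\colon\mbb D\to U_n$ uniformize the leaf of $\mathcal F_{U_n}$ through $z_n$, normalized so $\alpha_n(0)=z_n$ and $(1-|z|^2)^2|\alpha_n'(z)|^2=\eta_{U_n}(\alpha_n(z))$. Since $\rho(U,U_n)\to 0$, eventually $U_n\subset V$, so $\{\alpha_n\}$ is a family of discs in the taut domain $V$ with $\alpha_n(0)\to z_0$; tautness yields a subsequential limit $\alpha\colon\mbb D\to \ov U$ with $\alpha(0)=z_0$ and $|\alpha'(0)|^2=m\in[\ep/2,\infty)$. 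The key structural input is an analogue of Lemma~\ref{L:LimitUnif} for limits of uniformizers of the varying foliations $\mathcal F_{U_n}$: because each $\alpha_n$ is tangent to $\mathcal F$ on $U_n\sm E$ (leaves of $\mathcal F_{U_n}$ are open subsets of leaves of $\mathcal F$) and $T_q\mathcal F$ varies continuously off $E$, the limit $\alpha$ is tangent to $\mathcal F$ wherever $\alpha(z)\notin E$, and the identity--theorem argument of Lemma~\ref{L:LimitUnif} shows $\alpha(\mbb D)\subset L\cup E$ for a single leaf $L$ of $\mathcal F$, with $F=\alpha^{-1}(E)$ discrete or all of $\mbb D$. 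Since $m>0$, transversal type at $z_0$ (which, as $z_0$ is interior to $U$, coincides with transversal type of $\mathcal F_U$) rules out $\alpha(\mbb D)\subset E$ exactly as in Lemma~\ref{L:LimitUnif}; hence $F$ is discrete, $0\in F$, and shrinking $r$ we get $\alpha(D(0,r)^\ast)\subset U\sm E$ contained in a leaf of $\mathcal F_U$.

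The crux is to transfer the limiting derivative to $\eta_U$ at interior points. Fix $z_\ast\in D(0,r)^\ast$ and set $q=\alpha(z_\ast)\in U\sm E$ and $\tilde q_n=\alpha_n(z_\ast)\to q$. By the extremality formula, $\eta_{U_n}(\tilde q_n)=(1-|z_\ast|^2)^2|\alpha_n'(z_\ast)|^2\to (1-|z_\ast|^2)^2|\alpha'(z_\ast)|^2$. On the other hand, since $q\in U\sm E$ a compact neighbourhood of $q$ lies in $U\sm E$, so Theorem~$1.1$ of \cite{GV} gives $\eta_{U_n}\to\eta_U$ uniformly there; combined with continuity of $\eta_U$ and $\tilde q_n\to q$, this yields $\eta_{U_n}(\tilde q_n)\to\eta_U(q)$. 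Equating the two limits gives $\eta_U(\alpha(z_\ast))=(1-|z_\ast|^2)^2|\alpha'(z_\ast)|^2$ for every $z_\ast\in D(0,r)^\ast$. Letting $z_\ast\to 0$, the left side tends to $\eta_U(z_0)=0$ by continuity of the extension at $z_0\in E$, while the right side tends to $|\alpha'(0)|^2=m>0$, the desired contradiction; this closes the case $z_0\in E$ and proves uniform convergence on $K$.

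The main obstacle is the second step: establishing the Lemma~\ref{L:LimitUnif}--type dichotomy for the limit $\alpha$ when the approximating maps $\alpha_n$ uniformize the distinct foliations $\mathcal F_{U_n}$ rather than $\mathcal F$ itself, and checking that transversal type of $\mathcal F_U$ at the interior point $z_0$ may be used interchangeably with that of $\mathcal F$. Once $\alpha$ is known to be a genuine immersed leafwise disc through $z_0$, the bridge to $\eta_U$ via \cite{GV} at interior points $q=\alpha(z_\ast)$ and the final limit $z_\ast\to0$ are the decisive but comparatively soft ingredients.
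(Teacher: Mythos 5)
Your proposal is correct, and its skeleton coincides with the paper's: both proofs first settle convergence at points of $U\setminus E$ via normality in the taut domain $V$ and Theorem $1.1$ of \cite{GV}, then argue by contradiction at a point $z_0\in E$, extract a non-constant limit $\alpha$ of uniformizers with $|\alpha'(0)|^2=m>0$, and use transversality together with the dichotomy of Lemma~\ref{L:LimitUnif} (which, as you note, survives the passage from $\mathcal F$ to the restricted foliations $\mathcal F_{U_n}$ because leaves of $\mathcal F_{U_n}$ sit inside leaves of $\mathcal F$) to produce a parameter $z_\ast$ with $q=\alpha(z_\ast)\in U\setminus E$. The two arguments part ways only at the finish. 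The paper recentres: it precomposes the $\alpha_n$ with $\phi\in\mathrm{Aut}(\mathbb{D})$, $\phi(0)=z_\ast$, so that the recentred uniformizers have base points converging to $q\notin E$; by the first part their limit $\alpha\circ\phi$ must then be a uniformization of the leaf $L_{q,U}$, which is impossible because its image meets $E$. You instead make the first part quantitative: the extremal identity plus uniform convergence of $\eta_{U_n}$ near $q$ gives $\eta_U(\alpha(z_\ast))=(1-|z_\ast|^2)^2|\alpha'(z_\ast)|^2$ on a punctured disc, and letting $z_\ast\to 0$ collides the continuous extension $\eta_U\to 0$ at $E$ (Theorem~\ref{T:ContExt}) with $|\alpha'(0)|^2=m>0$. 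Your version needs the extra input of Theorem~\ref{T:ContExt} and is essentially the mechanism of part (2) of Theorem~\ref{T:DiscSet} transplanted to the varying domains, whereas the paper's recentring trick is purely qualitative and never evaluates $\eta_U$ near $E$; both are sound, and your explicit remarks on why $C_q\mathcal F_U$ may be used interchangeably with $C_q\mathcal F$ at interior points, and on the finiteness of $m$ via normality in $V$, close the only places where the transfer from $\mathcal F$ to $\mathcal F_{U_n}$ could have caused trouble.
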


The main steps in the proof are exactly the same as that in Theorem $1.1$ in \cite{GV}. The difference lies in using Lemma \ref{L:LimitUnif} (more precisely, the ideas in its proof) and the transversality condition to understand how the limit of a sequence of uniformizations of leaves of $\mathcal F_{U_n}$ behaves near $E$. 

\begin{proof}

To present the central ideas in brief, let $p_n \in U \setminus E$ be a sequence that converges to $p \in U \setminus E$. Let $\al_n : \mbb D \ra L_{p_n, U_n} \subset U_n$ be a uniformization map with $\al_{n}(0) = p_n$. Since $\rho(U, U_n) \ra 0$, $V$ eventually contains all the $U_n$'s and hence the family $\{\al_n\}_{n \ge 1}$ is normal. By passing to a subsequence, let $\ti \al : \mbb D \ra \ov V$ be a holomorphic limit with $\ti \al(0) = p$. Combining Lemma \ref{L:LimitUnif} with the proof of Theorem $1.1$ in \cite{GV} shows that $\ti \al : \mbb D \to L_{p, U}$ is a uniformization map.

\medskip

To show that the convergence is uniform on compact subsets of $U$, it suffices to prove that for $p \in E$, and $p_n \in U \sm E$ such that $p_n \to p$, 
\[
\eta_{U_n}(p_n) \to \eta_{U}(p) = 0.
\]
If not, then $\vert\eta_{U_n}(p_n)\vert > \ep$. Consider the uniformization map $\alpha_n$ of the leaf $L_{p_n, U_n}$ with $\alpha_n(0) = p_n$. Therefore, after passing to a subsequence we see that 
$\{\alpha_n\}_{n \ge 1}$ converges locally uniformly to $\tilde{\alpha} : \mbb{D} \rightarrow U$ with $\tilde{\alpha}(0) = p \in E$. If $\tilde{\alpha} \equiv p$, then $\vert \eta_{U_n}(p_n)\vert < \ep$ for large $n$, which is a contradiction. Therefore, $\tilde{\alpha}$ is a non-constant map. Since $\mathcal{F}_{U}$ is of transversal type, there exist $z_0 \in \mbb{D}$ such that $q = \tilde{\alpha}(z_0) \in U \setminus E$. Now take $\phi \in \text{Aut}(\mbb{D})$ such that $\phi(0) = z_0$. Observe that $\alpha_n \circ \phi$ is a uniformization of the leaf $L_{p_n, U_n}$ with $\alpha_n \circ \phi(0) = q_n \in U \setminus E$, and it converges locally uniformly to $\hat{\alpha} : \mbb{D} \rightarrow U$, with $\hat{\alpha}(0) = q \in U \setminus E$. By the above reasoning, $\hat{\alpha}$ should be a uniformization of the leaf $L_{q,U}$. But $\hat{\alpha} \equiv \tilde{\alpha} \circ \phi$, which is a contradiction, since $\tilde{\alpha} \circ \phi(\mbb{D})$ intersects $E$. Thus, $\eta_{U_n}$ converges to $\eta_{U}$ uniformly on compact subsets of $U$.

\end{proof}


\section{Local regularity of $\eta$} 

Let $\mathcal{F}$ be a hyperbolic {\sf shfc} on a complex manifold $M$ with singular set $E \subset M$. Following \cite{NM}, for $p \in E$, we will say that:
\begin{enumerate}
\item $\cal{F}$ satisfies $(P.1)$, if there exist a neighbourhood $U \subset M$ of $p$, such that the modulus of uniformization map $\eta_{U}$ of $\cal{F}\vert_{U}$ is continuous on $U \setminus E$.
\item $\cal{F}$ satisfies $(P.2)$, if there exist a neighourhood $\ti U \subset M$ of $p$, such that the metric $\frac{4g}{\eta_{\ti U}}$ is complete at each $q \in \ti U \cap E$ in $\ti U \sm E$.
\end{enumerate}

When $E$ is discrete, it is shown in \cite{NM} that $(P.2)$ implies $(P.1)$. This actually holds in general also.

\begin{thm}\label{T:LocalRegularity}
Let $\mathcal{F}$ be a hyperbolic {\sf shfc} on a domain $W \subset \mbb{C}^n$, with singular set $E \subset \mbb{C}^n$. Take $p \in E$. If $\cal{F}$ satisfies $(P.2)$ around $p$, then $\cal{F}$ satisfies $(P.1)$ around $p$.
\end{thm}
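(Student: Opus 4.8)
The plan is to follow the scheme of Lins Neto--Martins. Since $\eta$ is always lower semicontinuous, it suffices to prove that it is also upper semicontinuous on $U \sm E$ for a suitable neighbourhood $U$ of $p$. First I would fix a small taut ball $U$ with $\ov U \subset \ti U$ (where $\ti U$ is the neighbourhood furnished by $(P.2)$), so that the family $\cal U$ of uniformizations of $\cal F|_U$ is NCP; write $\eta_U$ for the modulus of $\cal F|_U$. Suppose, towards a contradiction, that $\eta_U$ is not upper semicontinuous at some $x_0 \in U \sm E$: there is a sequence $x_k \ra x_0$ with $\eta_U(x_k) \ra m$ and $m > \eta_U(x_0) > 0$, and normality of $\cal U$ forces $m < \infty$. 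Choosing uniformizations $\alpha_k$ with $\alpha_k(0) = x_k$ and $|\alpha_k'(0)|^2 = \eta_U(x_k)$, the NCP property produces (after a subsequence) a locally uniform limit $\alpha : \mbb D \ra U$ with $\alpha(0) = x_0$ and $|\alpha'(0)|^2 = m$.

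Next I would apply Lemma~\ref{L:LimitUnif} to $\cal F|_U$: the image $\alpha(\mbb D)$ lies in $L_{x_0} \cup E$ and $\alpha^{-1}(E)$ is discrete. If $\alpha^{-1}(E) = \emptyset$, then $\alpha$ is a holomorphic map of $\mbb D$ into the leaf $L_{x_0}$ with $\alpha(0) = x_0$; since the uniformization realizes the supremum defining $\eta_U$ (extremality, via Schwarz--Pick), this forces $m = |\alpha'(0)|^2 \le \eta_U(x_0)$, contradicting $m > \eta_U(x_0)$. Hence $\alpha^{-1}(E) \neq \emptyset$, and I may fix an \emph{interior} point $z_0 \in \mbb D$ with $q := \alpha(z_0) \in E$; the essential feature is that $z_0$ lies in the interior, so $d_{\mbb D}(0, z_0) < \infty$.

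The crux is to turn this into a violation of $(P.2)$. Each $\alpha_k$, being a uniformization, is an isometry of $(\mbb D, d_{\mbb D})$ onto $L_{x_k}$ equipped with its leafwise Poincar\'e metric, so $d_{L_{x_k}}(x_k, \alpha_k(z)) = d_{\mbb D}(0, z)$ for all $z$. Along leaf directions the rescaled ambient metric $\frac{4g}{\eta_U}$ coincides with the leafwise Poincar\'e metric, and leafwise geodesics are admissible curves in $U \sm E$; hence the $\frac{4g}{\eta_U}$-distance between $x_k$ and $\alpha_k(z)$ is at most $d_{\mbb D}(0,z)$, and the monotonicity $\eta_U \le \eta_{\ti U}$ (from $\cal O(\mbb D, \cal F_U) \subset \cal O(\mbb D, \cal F_{\ti U})$) upgrades this to the same bound for the $\frac{4g}{\eta_{\ti U}}$-distance. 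Diagonalizing --- choosing $z_j \ra z_0$ and then $k_j$ large --- the points $y_j := \alpha_{k_j}(z_j)$ satisfy $y_j \ra \alpha(z_0) = q \in E$, while their $\frac{4g}{\eta_{\ti U}}$-distance to $x_0$ is bounded by $d_{\mbb D}(0, z_0) + 1$ (the extra contribution from the distance $x_0$ to $x_{k_j}$ tends to $0$ because $\eta_{\ti U}$ is bounded below near $x_0$ by lower semicontinuity). Thus a sequence converges to $q$ while staying in a bounded $\frac{4g}{\eta_{\ti U}}$-ball about $x_0$, contradicting the completeness of $\frac{4g}{\eta_{\ti U}}$ at $q$ in $\ti U \sm E$. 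This rules out $\alpha^{-1}(E) \neq \emptyset$, and the previous paragraph then yields the contradiction, proving $\eta_U$ continuous on $U \sm E$, that is, $(P.1)$.

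I expect the main obstacle to be this last step: rigorously identifying $\frac{4g}{\eta_U}$ along leaf directions with the intrinsic leafwise Poincar\'e metric --- so that the isometry property of the $\alpha_k$ yields the distance bound --- and correctly formalizing ``completeness at $q$'' as the assertion that every sequence in $\ti U \sm E$ tending to $q$ must leave every bounded $\frac{4g}{\eta_{\ti U}}$-ball. A secondary point, which I regard as routine once $\ov U \subset \ti U$, is the bookkeeping between $\eta_U$ and $\eta_{\ti U}$ and the verification that the leafwise geodesics invoked above indeed remain inside $\ti U \sm E$, so that the hypothesis $(P.2)$ genuinely applies to them.
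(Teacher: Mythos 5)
Your argument is correct and rests on the same mechanism as the paper's own proof: uniformizations are distance-decreasing from the Poincar\'e disc into $(U \setminus E, \, 4g/\eta)$, so completeness of this metric at $E$ prevents a locally uniform limit of uniformizations from crossing between $E$ and its complement. The only (inessential) difference is packaging: the paper verifies condition (4) of Theorem~\ref{T:EquCont} --- a limit $\alpha$ with $\alpha(0)\in E$ has $\alpha(\mathbb{D})\subset E$ --- via an exhaustion of $B\setminus E$ by metric balls $\overline{D_r}$ about a fixed basepoint, whereas you rule out the dual configuration ($\alpha(0)\notin E$ but $\alpha(z_0)\in E$ at an interior point) directly, which amounts to establishing condition (2) and re-deriving the relevant implication of Theorem~\ref{T:EquCont} in the process.
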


\begin{proof}
Let $U \subset W \subset \mbb{C}^n$ be a domain such that $p \in U$ and $\frac{4g}{\eta_{U}}$ is complete on $E$ in $U \sm E$. Let $B = B_r = \{\vert z-p\vert < r\} \subset U$, and $\Lambda_{B} = \frac{4g}{\eta_{B}}$. Since $\eta_B \le \eta_U$, therefore $\Lambda_B \ge \Lambda_U$, which in turn shows that $\Lambda_B$ is complete on $E$ in $B \sm E$.

\medskip

Since $B$ is taut, $\cal{U}_B$ is NCP. Therefore, according to Theorem~\ref{T:EquCont}, it suffices to prove that if $\{\alpha_{n}\}_{n \ge 1} \subset \cal{U}_B$ converges locally uniformly on $\mbb{D}$ to some $\alpha : \mbb{D} \rightarrow B$, where $q = \alpha(0) \in E \cap B$, then $\alpha(\mbb{D}) \subset E \cap B$.
\medskip

Let $\{\alpha_{n}\}_{n \ge 1} \subset \cal{U}_B$ be such that $\alpha_n(0) \to q \in E \cap B$. Fix $p_0 \in B \setminus E$, and let 
\[
\overline{D_r} = \{p \in B \sm E : d_P(p,p_0) \le r\}
\]
where $d_P$ denotes the distance induced by the metric $\Lambda_B$ on $B$.

Since $\Lambda_B$ is complete on $E$ in $B \sm E$, we have $\cup_{r >0} \overline{D_r} = B \sm E$. Let $E_q$ be the connected component of $E \cap B$ containing $q$, and $W_r$ be the component of $B \sm \ov{D_r}$ containing $E_q$.

\no Clearly $W_{r_1} \subset W_{r_2}$ if $r_1 \ge r_2$. Also since $\cap_{r>0} B \sm \ov{D_r} = E \cap B$, therefore $\cap_{r>0} W_r = E_q$.

\medskip

\no {\it Claim}: Given $0 <\rho < 1$, and $r >0$, there exists $n_0 \ge 1$ such that if $n \ge n_0$ then $\alpha_n(\ov{\Delta_{\rho}}) \subset W_r$, where $\ov{\Delta_{\rho}} = \{z \in \mbb{D} : \vert z\vert \le \rho\}$.

\medskip
Let $\{\alpha_{n_j}\}_{j \ge 1}$ be a convergent subsequence which converges locally uniformly on $\mbb{D}$ to $\alpha : \mbb{D} \rightarrow B$, such that $\alpha(0) = q \in E \cap B$. Assuming the above claim, we get $\alpha(\ov{\Delta_{\rho}}) \subset E_q$, for all $\rho \in (0,1)$. Thus $\alpha(\mbb{D}) \subset E_q \subset E \cap B$.

\medskip
\no {\it Proof of the Claim:} Let $d_L$ denotes the distance induced by $d_p$ on a leaf $L$, and $d_{\Delta}$ denotes the Poincar\'e distance on unit disc $\mbb{D}$. For $\beta \in \cal{U}_B$ and $z_1, z_2 \in \mbb{D}$
\[
\vert \beta(z_2), \beta(z_1)\vert \le d_P(\beta(z_2), \beta(z_1)) \le d_L(\beta(z_2), \beta(z_1)) \le d_{\Delta}(z_2, z_1).
\]

Fix $\rho \in (0,1)$, and $r >0$. Let $c = d_{\Delta}(0, \rho)$ and $r_1 = c+r$. Since $\alpha_n(0) \to q$, there exist $n_0 \ge 1$ such that $\alpha_n(0) \in W_{r_1}$ for $n \ge n_0$. Therefore for $z \in \ov{\Delta_{\rho}}$, and $n \ge n_0$,
\[
d_P(\alpha_n(z), p_0) \ge d_P(\alpha_n(0), p_0) - d(\alpha_n(0), \alpha_n(z)) > r_1 - d_{\Delta}(0, z) \ge r_1 - c = r.
\]

Since $\alpha_n(\ov{\Delta_{\rho}})$ is connected and $\alpha_n(0) \in W_r$ for all $n \ge n_0$, therefore $\alpha_n(\ov{\Delta_{\rho}}) \subset W_r$, for all $n \ge n_0$.

\end{proof}

\medskip
\no The next example is motivated by Proposition 5 in \cite{NM}.

\begin{exam}\label{E:E3.2}
Consider the holomorphic vector field in $\mbb{C}^3$ defined by
\[
X(z) = (X^1 + \text{higher terms}, X^2 + \text{higher terms}, 0)
\]
where $X^1, X^2$ are homogeneous polynomial of degree $k$ in the variables $z_1, z_2$ (thinking of the variable $z_3$ as a parameter). By restricting the domain of $X$, assume that $0 \in \mbb{C}^2$ is an isolated singularity of $(X^1, X^2)$, for each fixed $z_3$. Let $U = B_{\rho} \times \Delta_{\rho}$ be the domain on which $X$ satisfies these conditions; here $B_{\rho} = \{ z \in \mbb{C}^2 : \vert z\vert < \rho\}$, and $\Delta_{\rho} = \{z \in \mbb{C}: \vert z\vert < \rho\}$. Consider the hermitian metric $h$ on $U$ given by
\[
h(z)= \frac{\vert \pi(z)\vert^{2k-2}}{\vert X(z)\vert^2 \log^2{\frac{\vert \pi(z)\vert}{r}}} \vert dz\vert^2,
\]
where $r > \rho$, and $\pi(z_1, z_2, z_3) = (z_1, z_2)$. Let $\cal{F}_{U}$ be the holomorphic foliation induced by $X$ on $U$, with the singular set being $E = \{z \in U : z_1 = z_2 = 0\}$. Since the leaves of the foliation $\cal{F}_U$ are contained in the level sets $\{z_3 = c\}$, it can be seen that $\cal{F}_U$ is of transversal type.

\medskip

Since $(X^1, X^2)$ has isolated singularity at $(0,0)$ for each $z_3 \in \Delta_{\rho}$, therefore we can choose $\rho >0$ small enough such that there exist a constant $C > 0$ which satisfies
\[
C^{-1} \lvert \pi(z)\rvert^k \le \lvert X(z)\rvert \le C \lvert \pi(z)\rvert^k
\]

\no Calculations in \cite{NM} shows that there exist constant $K > 0$ such that 
\[
\Lambda_{U} \ge \frac{K}{\vert \pi(z)\vert^2 \log^2{\frac{\vert \pi(z)\vert}{r}}} \lvert dz\rvert^2,
\]
where $\Lambda_{U}$ denotes the leafwise Poincar\'e metric of $\cal{F}_U$. Since the metric $\frac{K}{\vert \pi(z)\vert^2 \log^2{\frac{\vert \pi(z)\vert}{r}}} \lvert dz\rvert^2$ is complete at $E = \{\pi(z) = 0\}$ in $U$, therefore $\Lambda_U$ is also complete at $E$ in $U$. Hence $\cal{F}_U$ satisfies $(P.2)$ around $0 \in U$. Thus, the above result tells us that there exists a neighourhood $V$ of $0 \in \mbb{C}^3$ such that $\Lambda_V$ is continuous on $V$.

\end{exam}


\no\textbf{Acknowledgement:} The authors would like to thank Viêt-Anh Nguyên for posing the question about the behaviour of the map $\eta$ near the singular set $E$. 


\end{document}